\newtheorem{theorem}{Theorem}[section]
\newtheorem{corollary}{Corollary}[section]
\newtheorem{lemma}{Lemma}[section]
\theoremstyle{remark}
\newcommand{\un}{\underline}
\newcommand{\as}%
{\text{\raisebox{0.75pt}
{$\scriptstyle\medstar$\!}}}
\newcommand{\ti}{\tilde}
\newcommand{\ba}{\tilde}
\newcommand{\mc}{\mathcal}
\newcommand{\cc}{\mathbb C}
\newcommand{\rr}{\mathbb R}
\newcommand{\ff}{\mathbb F}
\newcommand{\pp}{\mathbb P}
\DeclareMathOperator{\diag}{diag}
\newcommand{\ffff}{\ff^{\times}/\ff^{\times}_0}
\newcommand{\ind}%
{\mathop{\rm ind}\nolimits}
\newcommand{\End}%
{\mathop{\rm End}\nolimits}
\renewcommand{\le}{\leqslant}
\renewcommand{\ge}{\geqslant}
\newcommand{\mat}[1]{\begin{bmatrix}
 #1  \end{bmatrix}}
 \newcommand{\matt}[1]{\left[\begin{smallmatrix}\\
 #1 \end{smallmatrix}\right]}
\begin{document}
\title{Isometric and selfadjoint operators on a vector space with nondegenerate diagonalizable form\thanks{Published in: {\bf Linear Algebra Appl. 587 (2020) 92--110}.}}

\date{}

\author{Jonathan V. Caalim\thanks{University of the Philippines-Diliman, Quezon City, Philippines jcaalim@math.upd.edu.ph}
                       \and
Vyacheslav Futorny\thanks{Department of Mathematics, University of S\~ao Paulo, Brazil, {futorny@ime.usp.br}}
                      \and
Vladimir V.
Sergeichuk\thanks{Institute of Mathematics,
Kiev, Ukraine, {sergeich@imath.kiev.ua}}
                      \and
Yu-ichi Tanaka\thanks{Joso Gakuin High School, Tsuchiura City, Ibaraki Prefecture, Japan
mathlogic.ty@gmail.com
}}

\maketitle

\begin{abstract}
Let $V$ be a vector space over a field $\ff$ with scalar product given by a nondegenerate sesquilinear form whose matrix is diagonal in some basis. If $\ff=\cc$, then we give canonical matrices of isometric and selfadjoint operators on $V$ using known classifications of isometric and selfadjoint operators on a complex vector space with nondegenerate Hermitian form.
If $\ff$ is a field of characteristic different from $2$, then we give canonical matrices of isometric, selfadjoint, and skewadjoint operators on $V$ up to classification of symmetric and Hermitian forms over finite extensions of $\ff$.

{\it Keywords:}
indefinite inner product spaces;  selfadjoint operators; isometric operators; unitary operators; $H$-unitary matrices.

{\it AMS classification:} 15A21; 15A63; 46C20; 47B50.
\end{abstract}

\newpage

\section{Introduction}

Our main results are the following:
\begin{itemize}

  \item
In \emph{Theorem \ref{yyj}}, the problem of classifying isometric, selfadjoint, and skewadjoint operators on a vector space  with nondegenerate diagonalizable sesquilinear form over a field $\ff$ of characteristic different from $2$ with nonidentity involution is reduced to the problem of classifying such operators on a vector space  with nondegenerate Hermitian form over $\ff$. \emph{Corollary \ref{ynh1}} of Theorem \ref{yyj} gives canonical matrices of   isometric and selfadjoint operators on a complex vector space  with nondegenerate diagonalizable sesquilinear form.

  \item
In \cite[Theorem 6]{ser_izv},  selfadjoint operators
on a vector space  with nondegenerate symmetric or Hermitian form over a field $\ff$ of characteristic different from 2 are classified up to classification of symmetric and Hermitian forms over finite extensions of $\ff$.  \emph{Theorem \ref{Theorem 6}} is an extended version of \cite[Theorem 6]{ser_izv} that includes  skewadjoint operators. \emph{Corollary \ref{theor}} of Theorem \ref{Theorem 6} gives known canonical matrices of selfadjoint and skewadjoint operators on a complex or real vector space  with nondegenerate symmetric or Hermitian form.
\end{itemize}

We denote by $\ff$ a field of characteristic different from 2 with involution $a\mapsto \tilde a$; i.e., a bijection $\ff\to\ff$ that satisfies
$\widetilde{a+b}=\tilde a+\tilde b$, $\widetilde{ab}=\tilde a\tilde b$, and
$\tilde{\tilde a}=a$
for all $a,b\in\ff$.

We denote by $V$ a vector space over $\ff$ with nondegenerate sesquilinear form $\mc F:V\times V\to\ff$
that is semilinear in the first argument and linear in the second ($\mc F$  is bilinear if the involution $a\mapsto \tilde a$ is the identity).  We suppose that $\mc F$ is  \emph{diagonalizable}; i.e.,  its matrix is diagonal in some basis.

A linear operator $\mc A:V\to V$ is \emph{isometric} if $\mc F(\mc A u,\mc A v)=\mc F(u, v)$ for all $u,v\in V$,
 \emph{selfadjoint} if $
\mc F(u,\mc A v)=\mc F(\mc A u, v)$, and
 \emph{skewadjoint} if $
\mc F(u,\mc A v)=- \mc F(\mc A u, v)$.

If $(\mc A,\mc F)_V$ is a pair consisting of a linear operator and a sesquilinear form on $V$, and $(A,F)$ is its matrix pair  in some basis of $V$, then we can reduce it by transformations
\begin{equation*}\label{edc}
(A,F)\mapsto (S^{-1}AS,S^{\as}FS),\qquad
 S^{\as}:=\tilde A^T,\quad
S\text{ is nonsingular}
\end{equation*}
by changing the basis.
Write
\[
J_n(\lambda):=\mat{\lambda&1&&0\\
&\lambda&\ddots \\ &&\ddots&1\\0&&&\lambda}
\qquad(n\text{-by-}n,\  \lambda \in\ff).
\]
The \emph{direct sum} of matrix pairs $(A,B)$ and
$(A',B')$ is  $(A\oplus A',B\oplus B')$.

The following corollary is proved in Section \ref{vvz} using \cite[Theorem 5]{ser_izv} and Theorem \ref{Theorem 6}.

\begin{corollary}[of Theorem \ref{yyj}]\label{ynh1}
Let $\mc A:V\to V$ be a linear operator on a complex vector space $V$ with scalar product given by a nondegenerate diagonalizable sesquilinear form $\mc F:V\times V\to\cc$.

{\rm(a)} If $\mc A$ is isometric, then there exists a basis of\/ $V$ in which the pair $(\mc A,\mc F)_V$ is given by a direct sum, determined uniquely up to permutation of summands, of matrix pairs of two types:
\begin{itemize}
  \item[\rm(i)]
$\left(\lambda \matt{1&2&\cdots&2
\\&1&\ddots&\vdots
\\&&\ddots&2
\\0&&&1},
\mu i^{n-1}\matt{0&&&& \udots
\\&&&\!\!-1\!\!
\\&&1&
\\&\!\!-1\!\!&\\
1&&&&0
}\right)$,
in which the matrices are $n$-by-$n$,  $\lambda,\mu \in\cc$, and
$|\lambda|=|\mu |= 1$;

  \item[\rm(ii)]
$\left(\mat{J_n(\lambda)&0\\0&
J_n(\lambda)^{-*}},
\mu\mat{0&I_n\\
I_n&0}\right)$, in which  $0\ne\lambda \in\cc$,
$|\lambda|\ne 1$, and  $\lambda $ is determined up to
replacement by
$\bar\lambda^{-1}$; $\mu\in\cc$, $|\mu |= 1$, and $\mu$ is determined up to
replacement by
$-\mu$.
\end{itemize}

{\rm(b)}  If $\mc A$ is selfadjoint, then there exists a basis of\/ $V$ in which the pair $(\mc A,\mc F)_V$ is given by a direct sum, determined uniquely up to permutation of summands, of matrix pairs of two types:
\begin{itemize}
  \item[\rm(i)]
$\left(J_n(\lambda ),\mu  \matt{0&&1
\\&\udots&\\
1&&0\\[1pt]}\right)$,  in which $\lambda \in\rr$, $\mu \in\cc$,  and $|\mu |= 1$;

  \item[\rm(ii)]
$\left(\mat{J_n(\lambda)&0\\0&
J_n(\lambda)^*},
\mu\mat{
0& I_n\\I_n&0
}\right)$, in which  $\lambda \in\cc\setminus\rr$ and  $\lambda $ is determined up to
replacement by
$\bar\lambda$; $\mu\in\cc$, $|\mu |= 1$, and $\mu$ is determined up to
replacement by
$-\mu$.
\end{itemize}
\end{corollary}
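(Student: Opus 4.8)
The plan is to derive both parts by specializing Theorem \ref{yyj} to $\ff=\cc$ with the involution on $\cc$ equal to complex conjugation $a\mapsto\bar a$ (to which the modulus in the statement refers), and then feeding in the known canonical forms of isometric and selfadjoint operators with respect to a nondegenerate \emph{Hermitian} form over $\cc$. Theorem \ref{yyj} reduces the classification of an isometric, selfadjoint, or skewadjoint operator on a space with a nondegenerate diagonalizable form to the corresponding classification for a nondegenerate Hermitian form; what I want to use from it is that the indecomposable summands of $(\mc A,\mc F)_V$ are the indecomposable Hermitian canonical pairs with their forms multiplied by scalars of $\cc^{\times}$ that are determined only up to a norm $s\tilde s$. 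Over $\cc$ the norms $s\tilde s=|s|^{2}$ exhaust the positive reals, so each such twisting scalar may be normalized to lie on the unit circle; this is the origin of all the parameters $\mu$ with $|\mu|=1$. Hence the first step is to list the Hermitian indecomposables over $\cc$ and attach the twisting scalars.

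For part (a) I would quote \cite[Theorem 5]{ser_izv}: over $\cc$ with a nondegenerate Hermitian form, an isometric operator is, up to isomorphism and permutation of summands, a direct sum of indecomposables of two kinds. The first, attached to an eigenvalue $\lambda$ with $|\lambda|=1$, is the pair of type (i) of part (a) but with a sign $\varepsilon\in\{1,-1\}$ in place of $\mu$; the factor $i^{n-1}$ together with the alternating signs on the antidiagonal are exactly what make that form Hermitian when $\mu=\varepsilon$. The second, attached to a pair of eigenvalues $\{\lambda,\bar\lambda^{-1}\}$ with $|\lambda|\ne1$, is $(J_n(\lambda)\oplus J_n(\lambda)^{-*},\ \mat{0&I_n\\I_n&0})$, and it carries no sign. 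Multiplying the form of a summand of the first kind by its unit-modulus twisting scalar $e^{i\theta}$ replaces $\varepsilon$ by $\mu:=\varepsilon e^{i\theta}$, an arbitrary point of the unit circle; doing the same to a summand of the second kind replaces its form by $\mu\mat{0&I_n\\I_n&0}$ with $|\mu|=1$. This is precisely the list (i)--(ii) of part (a). Part (b) is handled in the same way, with \cite[Theorem 5]{ser_izv} replaced by Theorem \ref{Theorem 6} (or, over $\cc$, its Corollary \ref{theor}): the Hermitian indecomposables are the pair of type (i) of part (b) with a sign $\varepsilon$, for $\lambda\in\rr$, and $(J_n(\lambda)\oplus J_n(\lambda)^{*},\ \mat{0&I_n\\I_n&0})$, for $\lambda\in\cc\setminus\rr$, and the twisting scalars produce the parameters $\mu$ with $|\mu|=1$ exactly as above.

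It remains to read off the uniqueness, i.e.\ to decide which of the listed pairs give equivalent $(\mc A,\mc F)_V$. For a summand of type (i), write the form as $\mu M$ with $M$ Hermitian; any base change $S$ fixing the operator is a polynomial in the operator's nilpotent part, and if it carries $\mu M$ to $\mu'M$ then $S^{*}MS=(\mu'/\mu)M$, so $\mu'/\mu$ is real (both sides are Hermitian) and cannot be negative --- otherwise the corresponding Hermitian block of \cite[Theorem 5]{ser_izv} would have no sign invariant. Hence $\mu'/\mu>0$, and with $|\mu|=|\mu'|=1$ this forces $\mu'=\mu$, so $\lambda$ and $\mu$ are complete invariants of the summand. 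For a summand of type (ii) the corresponding Hermitian block has no sign invariant, which means the base change $\diag(iI_{n},-iI_{n})$, commuting with the operator, carries the form to its negative; together with relabeling the two (non-similar) Jordan blocks this produces the ambiguities $\mu\mapsto-\mu$ and $\lambda\mapsto\bar\lambda^{-1}$ in part (a) (respectively $\lambda\mapsto\bar\lambda$ in part (b)), and the same scalar computation shows there are no others. Uniqueness up to permutation of summands is inherited from Theorem \ref{yyj}.

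The step I expect to be the main, if mild, obstacle is not any single computation but the bookkeeping that glues these pieces together: checking that the twisting scalars supplied by Theorem \ref{yyj} combine with the signs of \cite[Theorem 5]{ser_izv} and Theorem \ref{Theorem 6} to give exactly the parameters $\mu$ with $|\mu|=1$ --- in particular that the ambiguity in type (ii) collapses to $\mu\mapsto-\mu$ while type (i) has none --- and that the explicit normalizations in (a)(i) (the factor $i^{n-1}$, the alternating signs on the antidiagonal, and the unipotent operator with $2$'s above the diagonal) are consistent with the conventions of \cite[Theorem 5]{ser_izv} once the twist is applied. All of this turns on keeping the Hermitian transpose straight and on the fact, peculiar to complex conjugation, that the norms $s\tilde s$ are positive reals rather than arbitrary nonzero scalars.
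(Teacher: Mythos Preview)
Your proposal is correct and follows essentially the same route as the paper: apply Theorem~\ref{yyj} over $\cc$ with complex conjugation, note that $\pm r(\cc^{\times}/\rr^{\times})$ is the unit circle, and plug in the Hermitian canonical forms (from \cite[Theorem~5]{ser_izv}/\cite[Theorem~2.1(b)]{ser_iso} for (a) and Corollary~\ref{theor}(B) for (b)), letting the twists $e_i$ combine with the Hermitian signs to produce the parameters $\mu$ with $|\mu|=1$. The paper's own proof is two sentences to this effect; your additional uniqueness discussion (the $\diag(iI_n,-iI_n)$ computation for the $\mu\mapsto-\mu$ ambiguity in type~(ii), and the sign-invariant argument for type~(i)) fills in details the paper leaves to the cited sources, though note that the twisting scalars in Theorem~\ref{yyj} are determined modulo $\ff_0^{\times}=\rr^{\times}$, not modulo norms $s\tilde s$---it is the combination with the Hermitian $\pm1$ sign that yields the full unit circle.
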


Canonical forms of isometric, selfadjoint, and skewadjoint operators on a complex or real vector space with nondegenerate symmetric or Hermitian form are given in \cite{au,dok1,goh,goh-R,hig,mehl,mehl1,meh,rod}.

By an \emph{$\varepsilon $-Hermitian form} with $\varepsilon \in\{1,-1\}$ we mean a Hermitian form if $\varepsilon =1$ and a skew-Hermitian form if $\varepsilon =-1$. By a \emph{$\zeta$-adjoint operator} with $\zeta\in\{1,-1\}$ we mean a selfadjoint operator if $\zeta =1$ and a skewadjoint operator if $\zeta =-1$.

In Corollary \ref{theor}, we give canonical matrices of $\zeta $-adjoint operators on a vector
space $V$
with nondegenerate
$\varepsilon$-Hermitian form over the following fields:
\begin{itemize}
  \item[\rm(A)]
An algebraically
closed field $\ff$ of
characteristic
different from 2
with the identity
involution.

  \item[\rm(B)] An algebraically
      closed field $\ff$ with
      nonidentity involution $a\mapsto \tilde a$.  By \cite[Lemma 2.1(b)]{ser_iso}, the characteristic of $\ff$ is 0, the field $\pp:=\{a\in\ff\,|\,a=\tilde a\}$ is a \emph{real closed field}  (i.e., a field
whose algebraic closure has
degree $2$), $\ff=\pp+i\pp$ with $i^2=-1$, and the involution on $\ff$ is
$\alpha +\beta i\mapsto \alpha -\beta i$  with $\alpha,\beta \in\pp$.

  \item[\rm(C)]
A real closed field
$\mathbb P$. By
\cite[Lemma 2.1(a)]{ser_iso}, there exists a unique
linear ordering $\le$ on $\pp$
such that $a>0$ and
$b>0$ imply $a+b>0$ and
$ab>0$; its
algebraic closure is
 $\mathbb
P+\mathbb P i$  with $i^2=-1$ and with
 the
involution
$a+bi\mapsto a-bi$.
\end{itemize}

If $\ff$ is (B), then it is sufficient to study the case
\begin{equation}\label{cfm}
\varepsilon =\zeta =1
\end{equation}
since if $\mc F$ is a skew-Hermitian form, then $i\mc F$ is Hermitian, and if $\mc A$ is a skewadjoint operators on $V$, then $i\mc A$ is a selfadjoint operator:
\[\mc F(u,i\mc Av)=i\mc F(u,\mc Av)=-i\mc F(\mc Au,v)=-\mc F(\ba{\imath}\mc Au,v)=\mc F(i\mc Au,v).
\]

Define the $n\times n$ matrices
\begin{equation}\label{mmx}
Z_n:=\mat{0&&&1\\&&\udots\\&1\\1&&&0},
\quad
Z_n(\zeta)=\mat{\\[-22pt]0&&&&\iddots\\&&&\zeta\\&&1\\
&\zeta\\ 1&&&&0\\[2pt]}  \text{ with } \zeta\in\{1,-1\},
\end{equation}
and the $2n\times 2n$ matrices $K_{n}(\varepsilon)$ with $\varepsilon \in\{1,-1\}$:
\[
{\arraycolsep=0pt
K_{n}(\varepsilon) :=\mat{\\[-20pt]0&&&&\iddots
\\&&&\!\!\!\!\!-K(\varepsilon)\!\!\!\!
&\\&&\!\!\!\!K(\varepsilon)\!\!\!\!&&
\\&\!\!\!\!-K(\varepsilon)\!\!\!\!&&&
\\K(\varepsilon)\!\!\!\!&&&&0}},
\text{ in which }K(\varepsilon):=\begin{cases}
\mat{1&0\\0&1}\text{ if }\varepsilon =(-1)^{n+1},\\[12pt]
\mat{0&-1\\1&0}\text{ if }\varepsilon =(-1)^{n}.
\end{cases}
\]
If $\ff$ is (C) and $\alpha,\beta \in\pp$ , then we define the $2n\times 2n$ matrix
\begin{equation*}\label{aam}
J_n(\alpha+i\beta)^{\pp}:=\mat{M&I_2&&0\\
&M&\ddots\\&&\ddots&I_2\\0&&&M}\quad\text{with }M:=\mat{\alpha&-\beta\\ \beta&\alpha},
\end{equation*}
which is the realification of $J_n(\alpha+i\beta)$.

Canonical matrices of selfadjoint and skewadjoint operators on a vector space over $\cc$ or $\rr$
with  nondegenerate
symmetric or Hermitian form  are given in
\cite{dok1,goh_old,goh,lanc}, or see \cite[Theorems 7.2, 7.3, 8.2, 8.3]{mehl}  and \cite[Theorems 5.1, 5.2]{mehl1}. In Section \ref{vvz}, we reprove these results over (A)--(C) in a uniform manner and obtain the following corollary.

\begin{corollary}[of Theorem \ref{Theorem 6}]
\label{theor}
Let $\mc A:V\to V$ be a $\zeta $-adjoint
operator on a vector
space $V$  over a field\/ $\mathbb F$
with nondegenerate
$\varepsilon$-Hermitian form
$\mc F:V\times V\to\ff$, where
$\varepsilon,\zeta \in\{1,-1\}$. If $\mathbb F$ is one
of the fields {\rm(A)--(C)}, then there
exists a basis of\/ $V$ in which the
pair $(\mc A,\mc  F)_V$ is given by a direct
sum, uniquely determined up to
permutation of summands, of matrix pairs of the following types,
respectively\footnote{``Up to replacement by $\zeta \lambda $'' in (A) means that $\lambda $ and $\zeta \lambda $ give the same $(\mc A,\mc  F)_V$ in different bases; ``$(J_n(\lambda ),\pm Z_n)$'' in  (B) is the abbreviation of ``$(J_n(\lambda ),Z_n)$ and $(J_n(\lambda ),-Z_n)$''.}:
\begin{itemize}
 \item[\rm(A)]
For every $\lambda \in\ff$ determined up to replacement by $\zeta \lambda $,\\
 $\bullet$ if $\lambda \ne 0$ then\\
$\quad
\begin{cases}
(J_n(\lambda ),Z_n)&\text{if $\varepsilon=\zeta =1$},\\
\left(\mat{J_n(\lambda)&0\\0& \zeta
J_n(\lambda)^T},
\mat{
0&\varepsilon I_n\\I_n&0
}\right)&\text{if }(\varepsilon,\zeta)\ne(1,1);
\end{cases}
$
\smallskip

\noindent
$\bullet$ if $\lambda = 0$ then\\
$\quad \arraycolsep=2pt
\begin{cases}
(J_n(0),Z_n(\zeta ))&\!\!\!\!\text{if $\varepsilon =1$ for $n$ odd, $ \varepsilon=\zeta$ for $n$ even},
      \\
\left(\mat{J_n(0)&0\\0& \zeta
J_n(0)^T},
\mat{
0&\varepsilon I_n\\I_n&0
}\right)&\!\!\!\!\text{if $\varepsilon =-1$ for $n$
odd, $ \varepsilon\ne\zeta$ for $n$ even}.
\end{cases}
$

  \item[\rm(B)]
For $\varepsilon=\zeta= 1$ (see \eqref{cfm}) and for  every $\lambda \in\ff$ determined up to replacement by $\ba \lambda $,\\
$\quad
\begin{cases}
(J_n(\lambda ),\pm Z_n)&\text{if $\lambda \in\pp$},
      \\
\left(\mat{J_n(\lambda)&0\\0&
J_n(\lambda)^*},
\mat{
0& I_n\\I_n&0
}\right)&\text{if $\lambda \notin\pp$}.
\end{cases}
$

  \item[\rm(C)]\ {\rm(c$_1$)} For every $a \in\pp$ determined up to replacement by $\zeta a$,\\
 $\bullet$ if $a \ne 0$ then\\
$\quad
\begin{cases}
(J_n(a),\pm Z_n)&\text{if $\varepsilon=\zeta =1$},\\
\left(\mat{J_n(a)&0\\0& \zeta
J_n(a)^T},
\mat{
0&\varepsilon I_n\\I_n&0
}\right)&\text{if }(\varepsilon,\zeta)\ne(1,1);
\end{cases}
$
\smallskip

\noindent
$\bullet$ if $a = 0$ then\\
$\quad \arraycolsep=2pt
\begin{cases}
(J_n(0),\pm Z_n(\zeta ))&\!\!\!\!\text{if $\varepsilon =1$ for $n$ odd, $ \varepsilon=\zeta$ for $n$ even},
      \\
\left(\mat{J_n(0)&0\\0& \zeta
J_n(0)^T},
\mat{
0&\varepsilon I_n\\I_n&0
}\right)&\!\!\!\!\text{if $\varepsilon =-1$ for $n$
odd, $ \varepsilon\ne\zeta$ for $n$ even}.
\end{cases}
$
\smallskip

\ \ {\rm(c$_2$)}
For every $a\in \pp$ determined up to replacement by $\zeta a$ and every nonzero $b\in \pp$ determined up to replacement by $-b$,\\
$\bullet$ if $\zeta =1$ then\\
$\quad
\begin{cases}
(J_n(a+ib)^{\pp},Z_{2n})&\text{if $\varepsilon=1$},
                                                   \\
\left(\mat{J_n(a+ib)^{\pp}&0\\0& \left(J_n(a+ib)^{\pp}\right)^T},
\mat{
0&- I_{2n}\\I_{2n}&0
}\right)&\text{if $\varepsilon=-1$;}
\end{cases}
$
\smallskip

\noindent $\bullet$  if $\zeta =-1$ then\\
$\quad
\begin{cases}
(J_n(ib)^{\pp},\pm K_n(\varepsilon))&\text{if $a=0$},
                                                   \\
\left(\mat{J_n(a+ib)^{\pp}&0\\0& -\left(J_n(a+ib)^{\pp}\right)^T},
\mat{
0&\varepsilon  I_{2n}\\I_{2n}&0
}\right)&\text{if $a\ne 0$}.
\end{cases}
$
\end{itemize}
\end{corollary}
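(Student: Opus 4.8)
The plan is to deduce Corollary~\ref{theor} from Theorem~\ref{Theorem 6} by making explicit, for each of the fields (A)--(C), the ``classification of symmetric and Hermitian forms over finite extensions of $\ff$'' that Theorem~\ref{Theorem 6} leaves open. Theorem~\ref{Theorem 6} presents $(\mc A,\mc F)_V$ uniquely, up to permutation of summands, as a direct sum of indecomposable matrix pairs of two kinds. A \emph{hyperbolic} summand involves a block $C$ whose characteristic polynomial is a power of an irreducible polynomial $\phi$ \emph{not} fixed by the relevant duality ($\lambda\mapsto\zeta\lambda$ in (A) and (C), $\lambda\mapsto\ba\lambda$ in (B)), taken together with a representative of the dual block; it is a pair $C\oplus\zeta C^{T}$ with a split $\varepsilon$-Hermitian form and carries no further invariant. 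A \emph{self-dual} summand is built from a single block $C$ with $\phi$ fixed by that duality; on it $\mc F$ is determined, via the involution that the duality induces on the centralizer of $C$, by a single nondegenerate rank-one $\varepsilon'$-Hermitian form over the residue field $E:=\ff[t]/(\phi)$ equipped with that induced involution --- equivalently by the class of a scalar in $\pp_E^{\times}/N_{E/\pp_E}(E^{\times})$, where $\pp_E$ is the fixed field of the involution on $E$ (this group is $E^{\times}/(E^{\times})^2$ when the involution on $E$ is trivial). The sign $\varepsilon'$ differs from $\varepsilon$ by a factor depending on the size of $C$ and on $\zeta$; this is the origin of the parity conditions in the statement and of the criterion for when a nilpotent hyperbolic summand splits into two self-dual ones.

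I would then go through (A)--(C) in turn. For (A) the only finite extension is $E=\ff$ with the trivial involution; every element of $\ff$ is a square, so $E^{\times}/(E^{\times})^2$ is trivial and a nondegenerate rank-one symmetric form is unique. Self-dual blocks occur only for $\phi=t-\lambda$ with $\zeta\lambda=\lambda$; when they occur they give $(J_n(\lambda),Z_n)$ for $\varepsilon=\zeta=1$ and $(J_n(0),Z_n(\zeta))$ for $\lambda=0$ under the parity condition, and for the remaining $(\varepsilon,\zeta)$ no rank-one form of the required parity exists, so the summand is hyperbolic; the cut-off ``$\varepsilon=1$ for $n$ odd, $\varepsilon=\zeta$ for $n$ even'' is exactly the condition under which the nilpotent hyperbolic pair of part (A) fails to split into two copies of $(J_n(0),Z_n(\zeta))$. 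For (B), with $\varepsilon=\zeta=1$ by \eqref{cfm}, again $E=\ff$ but the induced involution is the nontrivial one $a\mapsto\ba a$; self-duality of $\phi=t-\lambda$ forces $\ba\lambda=\lambda$, i.e.\ $\lambda\in\pp$, and since $\pp$ is real closed one has $N(\alpha+\beta i)=\alpha^2+\beta^2$ and $\pp^{\times}/N(\ff^{\times})=\{\pm1\}$, giving $(J_n(\lambda),\pm Z_n)$; for $\lambda\notin\pp$ the $\lambda$-block pairs with the $\ba\lambda$-block into a hyperbolic summand. For (C), $\ff=\pp$ is real closed, $\phi$ is $t-a$ (so $E=\pp$ with trivial involution) or an irreducible quadratic $(t-a)^2+b^2$ (so $E=\bar\pp=\pp+i\pp$), and one checks that the involution induced on $\bar\pp$ is trivial when the self-dual block is selfadjoint ($\zeta=1$) and is conjugation when it is skewadjoint ($\zeta=-1$). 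Hence the relevant form group is $\pp^{\times}/(\pp^{\times})^2=\{\pm1\}$ over $\pp$ (yielding the $\pm Z_n$ and $\pm Z_n(\zeta)$ entries, with the same $\lambda=0$ parity condition as in (A)), trivial over $\bar\pp$ with the trivial involution since $\bar\pp$ is algebraically closed (yielding the parameter-free $(J_n(a+ib)^{\pp},Z_{2n})$), and $\pp^{\times}/N(\bar\pp^{\times})=\{\pm1\}$ over $\bar\pp$ with conjugation (yielding $(J_n(ib)^{\pp},\pm K_n(\varepsilon))$); the remaining lines are the associated hyperbolic summands, written in realified form whenever the block lives over $\bar\pp$.

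Uniqueness up to permutation of summands is inherited from Theorem~\ref{Theorem 6} together with the uniqueness of the form classifications just invoked; the replacements listed in the corollary --- $\lambda\leftrightarrow\zeta\lambda$, $\lambda\leftrightarrow\ba\lambda$, $a\leftrightarrow\zeta a$, $b\leftrightarrow-b$, and the sign changes inside hyperbolic blocks --- are exactly the ambiguities in labelling a block by a root of its polynomial $\phi$ and in choosing a basis realizing the one-dimensional form. I expect the real work, and the main obstacle, to be this last translation: identifying the induced involution on $E$ in each case, tracking the sign relating $\varepsilon$ and $\varepsilon'$ (hence the parity conditions and the nilpotent-splitting criterion), and verifying that the explicit matrices $Z_n$, $Z_n(\zeta)$, $K_n(\varepsilon)$ and the realifications $J_n(\alpha+i\beta)^{\pp}$ really do represent the rank-one forms produced by Theorem~\ref{Theorem 6}. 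Once this dictionary is set up, the rest is routine bookkeeping.
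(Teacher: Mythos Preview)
Your proposal is correct and follows essentially the same route as the paper: apply Theorem~\ref{Theorem 6}, identify the residue field $\ff[\omega]$ with the involution $f(\omega)\mapsto\bar f(\zeta\omega)$ in each case, classify the rank-one Hermitian forms \eqref{bhy} over it (trivial group for an algebraically closed field with identity involution, $\{\pm1\}$ for $\pp$ or for $\pp+i\pp$ with conjugation), and read off the explicit matrices from Lemma~\ref{THEOREM 8}. One small point of framing: in the paper the parity conditions at $\lambda=0$ arise purely from the existence criterion for $\Phi_{\varepsilon\zeta}$ in Lemma~\ref{THEOREM 8}(b), not from a sign $\varepsilon'$ on the residual form---the forms \eqref{bhy} are always Hermitian over $(\ff[\omega],{}^{\circ})$, with $\varepsilon$ and $\zeta$ entering only through whether $\Phi_{\varepsilon\zeta}$ exists---so your ``splitting of the nilpotent hyperbolic pair'' picture is a reinterpretation rather than what Theorem~\ref{Theorem 6} literally says, but it leads to the same dichotomy.
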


Analogous canonical matrices over (A)--(C) (and over the skew field of quaternions) of isometric operators on vector spaces with nondegenerate
symmetric/Hermitian form and of bilinear/sesquilinear forms  are given in a uniform manner in \cite[Theorem 2.1]{ser_iso} and \cite[Theorem 2.1]{hor-ser_can1}; they are easily derived from \cite[Theorems 5 and 3]{ser_izv}.

Note that the problems of classifying isometric operators and selfadjoint operators on a vector space with a possibly degenerate symmetric or Hermitian form are wild; see  \cite[Theorem 6.1]{ser_iso}. Recall that a classification problem is \emph{wild} if it contains the problem of classifying pairs of linear operators, and hence (see   \cite{bel}) the problem of classifying arbitrary systems of linear operators.

\section{Operators on a vector space with diagonalizable form}\label{jbt}

Let $\ff$ be a field of characteristic different from $2$. Let $\ff$ be a quadratic extension of its subfield $\mathbb K$ (i.e., $\dim_{\mathbb K}\ff=2$). By \cite[Chapter VI, \S 2, Example 1]{lan}, there exists $j\in\ff$ such that $\ff=\mathbb K(j)$ and $j^2\in\mathbb K$. Then $\ff=\mathbb K+j\mathbb K$ and $a+bj\mapsto a-bj$ ($a,b\in\mathbb K)$ is an involution on $\ff$.

Conversely, let $\ff$ be a field of characteristic different from 2 with nonidentity involution $a\mapsto \ti a$. Let us choose $a\in\ff$ such that $\ti a\ne a$ and write $j:=a-\ti a$. Then $\ti{\jmath}=-j$ and $\widetilde {j^2}=\ti{\jmath}\ti{\jmath}=j^2$.
The result of Artin \cite[Chapter VI, Theorem 1.8]{lan} ensures that $\ff$ has dimension $2$  over the fixed field  $\ff_0:=\{a\in\ff\,|\,\ti a=a\}$, and so
\begin{equation}\label{uht}
\ff=\ff_0+j\ff_0,\quad \ti{\jmath}=-j,\quad j^2\in\ff_0.
\end{equation}
We do not
consider skew-Hermitian forms over $\ff$ with nonidentity involution since if $\mc F$ is skew-Hermitian over $\ff$, then
\begin{equation}\label{jna}
 j\mc F\ \text{ is a Hermitian form}.
\end{equation}

Let $\ff^{\times}$ and $\ff^{\times}_0$ be the multiplicative groups of $\ff$ and  $\ff_0$.
We choose an element in each coset of the quotient group
$
\ff^{\times}/\ff^{\times}_0
$
and denote by $r(\ffff)$ the set of chosen elements.
In particular, if $\ff=\cc$, then $\ff_0=
\rr$ and we can take
\begin{equation}\label{rrd}
r(\ffff)=
r(\cc^{\times}/\rr^{\times})=\{\cos \varphi +i\sin \varphi\,|\,0\le \varphi<\pi\}.
\end{equation}

\begin{lemma}\label{iuj1}
{\rm(a)} Let $\ff$ be a field of characteristic different from 2 with nonidentity involution.
Let $V$ be a vector space over $\ff$ with nondegenerate diagonalizable sesquilinear form $\mc F:V\times V\to\ff$.
Let
$\mc A:V\to V$ be an isometric, selfadjoint, or skewadjoint linear operator.
Then there exists a basis of\/ $V$, in which $\mc F$ is given by a diagonal matrix
\begin{equation}\label{njh1}
D= e_1D_1\oplus\dots\oplus e_tD_t,\qquad e_1,\dots,e_t\in r(\ffff),
\end{equation}
in which  all $e_1,\dots,e_t$ are distinct and $D_1,\dots,D_t$ are diagonal matrices over $\ff_0$.
In each such basis, the matrix of $\mc A$ has the block diagonal form
\begin{equation*}\label{rtf2}
A= A_1\oplus\dots\oplus A_t,\qquad\text{every $A_i$ has the size of }D_i.
\end{equation*}

{\rm(b)} If $\ff=\cc$, then the decomposition \eqref{njh1} can be taken in the form
\begin{equation}\label{njh4}
D= e_1\mat{I_{p_1}&0\\0&-I_{q_1}}\oplus\dots\oplus e_t\mat{I_{p_t}&0\\0&-I_{q_t}},
\end{equation}
in which
$
e_1,\dots,e_t\in\{\cos \varphi +i\sin \varphi\,|\,0\le \varphi<\pi\}
$  are distinct, $p_l,q_l\in\{0,1,2,\dots\}$ and $p_l+q_l\ge 1$ for all $l=1,\dots,t$.
\end{lemma}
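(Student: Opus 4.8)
The plan is to push everything back to a diagonal matrix of $\mc F$, which exists by hypothesis, and then read off which entries of $\mc A$ are forced to vanish.

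\medskip
\noindent\textbf{Part (a).} Since $\mc F$ is nondegenerate and diagonalizable, I would first choose a basis in which its matrix is $\diag(d_1,\dots,d_n)$ with all $d_k\in\ff^{\times}$. With $\ff_0=\{a\in\ff\,|\,\ti a=a\}$ as in \eqref{uht}, each $d_k$ lies in a unique coset of the group $\ff^{\times}/\ff^{\times}_0$; letting $e_k\in r(\ffff)$ be the chosen representative of that coset, we have $d_k=f_ke_k$ with $f_k\in\ff^{\times}_0$. A permutation of the basis vectors grouping together the indices with equal $e_k$ then brings $\mc F$ into the form \eqref{njh1} (the distinct representatives that occur being relabelled $e_1,\dots,e_t$), with each $D_i$ the diagonal matrix over $\ff_0$ built from the corresponding $f_k$'s; this is the existence claim.

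\medskip
For the second assertion I need that in \emph{any} basis in which the matrix $D$ of $\mc F$ has the form \eqref{njh1}, the matrix $A=(a_{ij})$ of $\mc A$ is block diagonal conformally with $D$. The key elementary observation is that for nonzero $c,c'\in\ff$ one has $c/\ti c=c'/\ti{c'}$ iff $c,c'$ lie in the same coset of $\ff^{\times}/\ff^{\times}_0$: indeed $c/\ti c=e/\ti e$ depends only on the coset representative $e$, and $e/\ti e=e'/\ti{e'}$ forces $e/e'=\widetilde{e/e'}\in\ff^{\times}_0$, hence $e=e'$. Thus, writing $D=\diag(d_1,\dots,d_n)$, the quantity $d_k/\ti{d_k}$ is constant on each block of \eqref{njh1} and takes distinct values on distinct blocks. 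Next I translate the hypotheses on $\mc A$ into the matrix identities $DA=\zeta\,\ti A^TD$ (with $\zeta\in\{1,-1\}$, the selfadjoint/skewadjoint cases) and $\ti A^TDA=D$ (the isometric case). In the $\zeta$-adjoint case, comparing the $(i,j)$ and $(j,i)$ entries and combining them gives $a_{ij}\bigl(1-(\ti{d_i}/d_i)(d_j/\ti{d_j})\bigr)=0$, so $a_{ij}=0$ unless $i$ and $j$ lie in the same block. In the isometric case $A$ is invertible and, applying the anti-automorphism $X\mapsto\ti X^T$ to $\ti A^TDA=D$, one also gets $\ti A^T\ti DA=\ti D$; eliminating $\ti A^T=DA^{-1}D^{-1}$ between the two identities shows that $A$ commutes with the diagonal matrix $D^{-1}\ti D=\diag(\ti{d_1}/d_1,\dots,\ti{d_n}/d_n)$, whose diagonal entries again separate the blocks, so $a_{ij}=0$ off the blocks. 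In all three cases $A=A_1\oplus\dots\oplus A_t$, as required.

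\medskip
\noindent\textbf{Part (b).} Here $\ff=\cc$, $\ff_0=\rr$, and I would take $r(\ffff)$ as in \eqref{rrd}. Applying (a) yields a basis in which $\mc F=e_1D_1\oplus\dots\oplus e_tD_t$ with the $e_i$ distinct points of the unit semicircle and each $D_i$ a nonsingular real diagonal matrix. Within the $i$th block, rescaling the $k$th basis vector by $|f_k|^{-1/2}$ and then permuting turns $D_i$ into $\diag(I_{p_i},-I_{q_i})$, where $p_i$ and $q_i$ count the positive and negative entries of $D_i$; this change of basis is real, so it passes through the scalar $e_i$ unchanged, and $p_i+q_i\ge1$ because the block is nonempty. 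Carrying this out in every block produces \eqref{njh4}.

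\medskip
The one step that needs care is the block-diagonality in (a), and inside it the isometric case: the clean route is the trick of deriving the companion identity $\ti A^T\ti DA=\ti D$ and reducing to the statement that $A$ commutes with $D^{-1}\ti D$. Everything else is coset bookkeeping and entrywise manipulation of the matrix identities.
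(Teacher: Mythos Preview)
Your proof is correct, and it is noticeably cleaner than the paper's. The paper proves the block-diagonality in (a) by an induction on $t$: it peels off the last block, writes $e_t^{-1}D=D'\oplus D_t$ with $D'=\Delta_0+j\Delta_1$ over $\ff_0$, observes that $\Delta_1$ is nonsingular because $e_t^{-1}e_i\notin\ff_0$ for $i<t$, and then, in each of the isometric and $\zeta$-adjoint cases, subtracts the $\as$-conjugate of the relevant block identity from itself to isolate $\Delta_1$ and kill the off-diagonal blocks $X,Y$. Your argument bypasses the induction and the $\Delta_0+j\Delta_1$ decomposition by working directly with the scalar invariant $d_k/\ti d_k$: you show it depends only on the coset of $d_k$ in $\ff^\times/\ff_0^\times$ and separates the blocks, then in the $\zeta$-adjoint case read off $a_{ij}=0$ entrywise, and in the isometric case deduce that $A$ commutes with the diagonal matrix $D^{-1}\ti D$. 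This is the same underlying obstruction (different cosets $\Leftrightarrow$ $d_i/\ti d_i\neq d_j/\ti d_j$ $\Leftrightarrow$ the ``imaginary part'' $\Delta_1$ is nonsingular), but your packaging avoids block bookkeeping and makes the mechanism transparent. Part~(b) and the existence portion of (a) match the paper essentially verbatim.
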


\begin{proof} (a)
Let the form $\mc F$ be given by a nonsingular diagonal matrix $D=\diag(d_1,\dots,d_n)$ in some basis of $V$. Permuting the diagonal entries, we obtain $D$ of the form \eqref{njh1} in some basis of $V$. Then
\begin{equation}\label{hkt}
e_t^{-1}D=D'\oplus D_t,\qquad D':=e_t^{-1}(e_1D_1\oplus\dots\oplus e_{t-1}D_{t-1} ).
\end{equation}
Write
\begin{equation*}\label{njh1a}
D'=\Delta_0+j\Delta_1,\qquad
\text{$\Delta_0$ and $\Delta_1$ are over $\ff_0$.}
\end{equation*}
For each index $i<t$, we have $e_i\ff_0\ne e_t\ff_0$, and so $e_t^{-1}e_i\notin\ff_0$. Hence
\begin{equation}
\label{uuh}
\text{the matrix $\Delta_1$ is nonsingular.}
\end{equation}
Since $D'$ and $D_t$ are diagonal matrices,  \begin{equation}\label{ddm}
D'-D^{\prime {\as}}=D'-\tilde{ D'}=(\Delta_0+j\Delta_1)
-(\Delta_0-j\Delta_1)=2j\Delta_1,\quad D_t-D_t^{\as}=0.
\end{equation}

Let $\mc A:V\to V$ be an operator and let $A$ be its matrix in the basis, in which  $\mc F$ is given by the matrix
\eqref{njh1}. We partition it into blocks conformally with \eqref{hkt}:
\begin{equation}\label{nbo}
A=\mat{B&X
\\Y&A_t},\qquad A_t\text{ has the size of }D_t.
\end{equation}

\emph{Case 1: $\mc A$ is an isometric operator.} The equality $\mc F(\mc A u,\mc A v)=\mc F(u, v)$ implies that $A^{\as}DA=D$. Thus, $A^{\as}(e_t^{-1}D)A=e_t^{-1}D$; i.e,
\begin{equation}\label{hte}
\mat{B^{\as}&Y^{\as}
\\X^{\as}&A_t^{\as}}\mat{D'&0\\0&D_t}\mat{B&X
\\Y&A_t}=\mat{D'&0\\0&D_t}.
\end{equation}
It suffices to prove that $X=0$ and $Y=0$ since then $B^{\as}D'B= D'$  and we can use induction on $t$.

By \eqref{hte},
\begin{align}\label{a1}
B^{\as}D'B+Y^{\as}D_tY&=D',\\\label{a2}
B^{\as}D'X+Y^{\as}D_tA_t&=0,\\\label{a3}
X^{\as}D'B+A_t^{\as}D_tY&=0,
\\\label{a4}
X^{\as}D'X+A_t^{\as}D_tA_t&=D_t.
\end{align}

Applying the star to \eqref{a1} and subtracting the obtained equality from \eqref{a1}, we get
\begin{equation*}\label{dfg1}
B^{\as}(D'-D^{\prime {\as}})B+Y^{\as}(D_t-D_t^{\as})Y=D'-D^{\prime {\as}}.
\end{equation*}
By \eqref{uuh} and  \eqref{ddm},
$B^{\as}\Delta_1B=\Delta_1$ and the matrix $B$ is nonsingular.

Applying the star to \eqref{a3} and subtracting the obtained equality from \eqref{a2}, we get
\begin{equation*}\label{dfg}
B^{\as}(D'-D^{\prime {\as}})X+Y^{\as}(D_t-D_t^{\as})A_t=0.
\end{equation*}
By \eqref{ddm}, $B^{\as}\Delta_1X=0.$
Since $B$ and $\Delta_1$ are nonsingular, $X=0$.
The equality \eqref{a4} implies that $A_t^{\as}D_tA_t=D_t$. Hence $A_t$ is nonsingular. By \eqref{a3}, $A_t^{\as}D_tY=0$, and so $Y=0$.

Therefore, $A=B\oplus A_t$, which proves the lemma if the operator $\mc A$ is isometric.

\emph{Case 2: $\mc A$ is a $\zeta $-adjoint operator; $\zeta \in\{1,-1\}$.}
The equality $\mc F(u,\mc A v)=\zeta  \mc F(\mc A u, v)$ implies that its matrix \eqref{nbo} satisfies $DA=\zeta  A^{\as}D$. With the notation \eqref{hkt}, we have
\begin{equation}\label{htej}
\mat{D'&0\\0&D_t}\mat{B&X
\\Y&A_t}=\zeta  \mat{B^{\as}&Y^{\as}
\\X^{\as}&A_t^{\as}}\mat{D'&0\\0&D_t}.
\end{equation}
It suffices to prove that $X=0$ and $Y=0$ since then $D'B= \zeta  B^{\as}D'$ and we can use induction on $t$.

By \eqref{htej},
\begin{align}\label{b2}
D'X&=\zeta  Y^{\as}D_t,\\\label{b3}
D_tY&=\zeta  X^{\as}D'.
\end{align}

We apply the star to \eqref{b3}, multiply it by $\zeta $, interchange its sizes, substitute the obtained equality from \eqref{b2}, and get
\begin{equation*}\label{dfgd}
(D'-D^{\prime {\as}})X=\zeta  Y^{\as}(D_t-D_t^{\as}).
\end{equation*}
By \eqref{ddm}, $\Delta_1X=0$. By \eqref{uuh}, $X=0$. The equality \eqref{b3} ensures that $Y=0$.

(b) Let $\ff=\cc$. We take $r(\ffff)$ in the form
\eqref{rrd}.
 Let the form $\mc F$ be given by a nonsingular diagonal matrix $D=\diag(d_1,\dots,d_n)$. For each index $i$, there exist $a_i\in\rr$ and $c_i\in \cc$ with $|c_i|=1$ such that $d_i=c_ia_i^2$. Replacing $D$ by
$S^*DS$ with $S:=\diag(1/a_1,\dots,1/a_n)$
 and permuting the diagonal entries, we obtain $D$ of the form \eqref{njh4}.
\end{proof}

Two pairs $(\mc A,\mc F)_V$ and $(\mc B,\mc H)_W$  consisting of linear mappings and sesquilinear forms on vector spaces $V$ and $W$ are \emph{isomorphic} if there exists a linear bijection $\varphi :V\to W$ that transforms $\mc A$ to $\mc B$ and $\mc F$ to $\mc H$; that is,
$\varphi \mc A=\mc B\varphi $ and $\mc F(v,v')=
\mc H(\varphi v,\varphi v')$ for all $v,v'\in V$.

The pairs $(\mc A_i,\mc F_i)_{V_i}$ from the following theorem are classified in \cite[Theorems 5 and 6]{ser_izv}  and in Theorem \ref{Theorem 6}  up to classification of symmetric and Hermitian forms over finite extensions of $\ff$.

\begin{theorem}\label{yyj}
Let $\ff$ be a field of characteristic different from 2 with nonidentity involution.
Let $V$ be a vector space over $\ff$ with nondegenerate diagonalizable sesquilinear form $\mc F:V\times V\to\ff$.
Let
$\mc A:V\to V$ be an isometric (respectively, $\zeta $-adjoint with $\zeta \in\{1,-1\}$) linear operator.
Then
\begin{equation}\label{rmr}
(\mc A,\mc F)_V=(\mc A_1,e_1\mc F_1)_{V_1}\oplus\dots\oplus (\mc A_t,e_t\mc F_t)_{V_t},\quad e_1,\dots,e_t\in r(\ffff),
\end{equation}
in which all $e_1,\dots,e_t$ are distinct and each $\mc A_i$ is an isometric (respectively, $\zeta  $-adjoint) linear operator on a vector space $V_i$ over $\ff$ with a nondegenerate Hermitian form $\mc F_i:V_i\times V_i\to\ff$.
The summands in \eqref{rmr} are uniquely determined by $(\mc A,\mc F)_V$, up to permutations and replacements of all $(\mc A_i,\mc F_i)_{V_i}$ by isomorphic pairs.
\end{theorem}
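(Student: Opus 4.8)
The plan is to read off the decomposition \eqref{rmr} almost directly from Lemma \ref{iuj1}(a), and then to prove uniqueness by producing an intrinsic invariant of $(\mc A,\mc F)_V$ that recovers both the scalars $e_i$ and the subspaces $V_i$. For existence, Lemma \ref{iuj1}(a) already supplies a basis of $V$ in which $\mc F$ has matrix $D=e_1D_1\oplus\dots\oplus e_tD_t$ with the $e_i\in r(\ffff)$ distinct and the $D_i$ diagonal over $\ff_0$, and in which $\mc A$ is block diagonal, $A=A_1\oplus\dots\oplus A_t$ with $A_i$ of the size of $D_i$. I would take $V_i$ to be the $i$-th coordinate subspace, $\mc A_i:=\mc A|_{V_i}$, and $\mc F_i$ the form on $V_i$ with matrix $D_i$; since the diagonal entries of $D_i$ lie in $\ff_0=\{a\in\ff\mid\tilde a=a\}$ we have $D_i^{\as}=D_i$, so $\mc F_i$ is Hermitian, $\mc F$ restricts to $e_i\mc F_i$ on $V_i$, and $\mc F$ vanishes between distinct blocks, which is \eqref{rmr}. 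Each $\mc A_i$ inherits the relevant property by passing to the $i$-th diagonal block of $A^{\as}DA=D$ (resp.\ $DA=\zeta A^{\as}D$): that block reads $e_iA_i^{\as}D_iA_i=e_iD_i$ (resp.\ $e_iD_iA_i=\zeta e_iA_i^{\as}D_i$), and cancelling the nonzero scalar $e_i$ shows $\mc A_i$ is isometric (resp.\ $\zeta$-adjoint) on $(V_i,\mc F_i)$.

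For uniqueness, let $\mc G$ be the sesquilinear form $\mc G(u,v):=\widetilde{\mc F(v,u)}$, whose matrix in every basis is $F^{\as}$, and let $\mc N\colon V\to V$ be the unique operator with $\mc F(u,v)=\mc G(u,\mc N v)$ for all $u,v$; in any basis its matrix is $N=(F^{\as})^{-1}F$, the sesquilinear analogue of the cosquare $(F^T)^{-1}F$ of a bilinear form. First I would check that $\mc N$ commutes with $\mc A$: an isometric $\mc A$ for $\mc F$ is also isometric for $\mc G$, so $A^{\as}FA=F$ and $A^{\as}F^{\as}A=F^{\as}$, and combining these two identities gives $A^{-1}NA=N$; the $\zeta$-adjoint case runs the same way from $FA=\zeta A^{\as}F$, which likewise forces $F^{\as}A=\zeta A^{\as}F^{\as}$. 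Next, given any decomposition \eqref{rmr}, choose a basis refining $V=V_1\oplus\dots\oplus V_t$; then $F=e_1G_1\oplus\dots\oplus e_tG_t$ with $G_i^{\as}=G_i$, so $F^{\as}=\tilde e_1G_1\oplus\dots\oplus\tilde e_tG_t$ and $N=(e_1/\tilde e_1)I\oplus\dots\oplus(e_t/\tilde e_t)I$. The assignment $e\mapsto e/\tilde e$ is a well-defined injection on $\ff^{\times}/\ff^{\times}_0$ (from $e/\tilde e=e'/\tilde e'$ one gets $e/e'=\widetilde{e/e'}\in\ff_0$), so the scalars $e_i/\tilde e_i$ are pairwise distinct; hence $\mc N$ is diagonalizable, its spectrum equals $\{e_1/\tilde e_1,\dots,e_t/\tilde e_t\}$, the $\mu$-eigenspace of $\mc N$ is the $V_i$ with $e_i/\tilde e_i=\mu$, and the representatives $e_i\in r(\ffff)$ are recovered from that spectrum. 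Since $\mc N$ depends only on $(\mc A,\mc F)_V$, it follows that $V_i=\ker(\mc N-(e_i/\tilde e_i)\,\mathrm{id})$, $\mc A_i=\mc A|_{V_i}$ and $\mc F_i=e_i^{-1}\mc F|_{V_i\times V_i}$ are all determined by $(\mc A,\mc F)_V$; so any two decompositions \eqref{rmr} coincide after a permutation of summands, hence agree up to permutation and isomorphism of pairs.

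The one conceptual step is isolating the invariant $\mc N=(F^{\as})^{-1}F$; after that the argument is bookkeeping. The tempting alternative of splitting $\mc F$ into its Hermitian and skew-Hermitian parts and separating blocks by the pencil they span is clumsier, because the skew part of $e_i\mc F_i$ vanishes precisely when $e_i\in\ff^{\times}_0$, so the blocks are not separated uniformly; the cosquare $\mc N$ sidesteps this, since at most one $e_i$ lies in the trivial coset and the corresponding eigenvalue $1$ of $\mc N$ causes no clash. So the expected obstacle is conceptual rather than computational.
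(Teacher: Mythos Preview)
Your proof is correct and genuinely more elementary than the paper's. Both arguments obtain existence from Lemma \ref{iuj1}(a) in the same way, and both eventually rest on the invariant $e\mapsto \ba e^{\,-1}e$ of the coset $e\ff_0^{\times}$. The difference lies in how uniqueness is extracted. The paper first uses the cosquare $F^{-\as}F$ only at the level of diagonal matrices to show that the multiset of eigenvalues $\ba e_i^{\,-1}e_i$, and hence $t$, the $e_i$, and the dimensions $\dim V_i$, are determined by $\mc F$; it then appeals to the representation-theoretic machinery of \cite{ser_izv} (statement \eqref{pot}, via the underlying quiver of the mixed graph) to conclude that the full summands $(\mc A_i,e_i\mc F_i)_{V_i}$ are determined up to isomorphism. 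You instead promote the cosquare to a basis-free linear operator $\mc N$ on $V$, observe that in any decomposition \eqref{rmr} it acts on $V_i$ as the scalar $e_i/\ba e_i$, and use injectivity of $e\ff_0^{\times}\mapsto e/\ba e$ to identify each $V_i$ with an eigenspace of $\mc N$. This yields a slightly stronger conclusion (the subspaces $V_i$ themselves are canonical, not merely the isomorphism classes of the summands) with no external input beyond Lemma \ref{iuj1}. Note that your commutation check $\mc A\mc N=\mc N\mc A$ is what guarantees $\mc A(V_i)\subseteq V_i$ and hence that $\mc A|_{V_i}$ makes sense intrinsically; it would be worth making that role explicit. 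The paper's route has the advantage of fitting into the general framework it uses elsewhere, while yours is self-contained and avoids the Krull--Schmidt-type result \eqref{pot} entirely.
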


\begin{proof}
By Lemma \ref{iuj1}, there exists a decomposition $V=V_1\oplus\dots\oplus V_t$ that ensures
\eqref{rmr}. It remains to prove that the summands in \eqref{rmr} are uniquely determined, up to  permutations and isomorphisms.

If $\ff=\cc$, then the uniqueness follows from the fact that each system of linear mappings and sesquilinear forms over $\cc$ is uniquely decomposed, up to  permutations and isomorphisms of summands, into a direct sum of indecomposable systems; see \cite[Theorem  2]{ser_izv} or \cite[Corollary of Theorem 3.2]{ser_iso}.

If $\ff\ne\cc$, the proof is more complicated.
We use the method that was developed in \cite{ser_izv} (or see \cite{ser_iso}). Systems of linear mappings and sesquilinear forms are considered as representations of \emph{mixed graphs}; i.e., graphs with undirected and directed edges. Directed edges represent linear mappings, and undirected edges represent forms.
Each pair $(\mc A,\mc F)_V$ from Theorem \ref{yyj} defines the representation
\begin{equation}\label{ffz}
\mc P:\xymatrix@R=6pt{V
\save !<5pt,0cm>
\ar@{-}@(ur,dr)^{\mc F}
\restore
\save !<-5pt,0cm>
\ar@{->}@(ul,dl)_{\mc A}
\restore
}\quad\text{of
the mixed graph}\quad G:  \xymatrix@R=6pt{v
\ar@{-}@(ur,dr)^{\gamma }
\ar@{->}@(ul,dl)_{\alpha }
}
\end{equation}
Let $V^{\as}$ be the \emph{$^{\as}$dual space} of all semilinear forms $\varphi :V\to\ff$. The linear mapping $\mc A:V\to V$ defines the \emph{$^{\as}$adjoint linear mapping} $\mc A^{\as}:V^{\as}\to V^{\as}$ defined by
$(\mc A^{\as}\varphi )v:=\varphi (\mc A v)$ for all $v\in V$ and $\varphi \in V^{\as}$. The sesquilinear form
$\mc F:V\times V\to\ff$ defines both the linear mapping (we denote it by the same letter)
$\mc F: V\to V^{\as}$ via $v\mapsto \mc F(?,v)$
and the $^{\as}$adjoint linear mapping
$\mc F^{\as}:V\to V^{\as}$ via $u\mapsto \widetilde{\mc F(v,?)}.$
The representation $\mc P$ in \eqref{ffz} defines the representation
\begin{equation*}\label{bjr}
\underline{\mc P}:\xymatrix@R=6pt{V
\save !<-5pt,0cm>
\ar@{->}@(ul,dl)_{\mc A}
\restore
\ar@/^0.5pc/@{->}[r]^{\mc F}
\ar@/_0.5pc/@{->}[r]_{\mc F^{\as}}
                         &
                         V^{\as}
\save !<7pt,0cm>
\ar@{->}@(ur,dr)^{\mc A^{\as}}
\restore
}\text{\quad  of the quiver \ }
\underline{G}:\xymatrix@R=6pt{v
\save !<-2pt,0cm>
\ar@{->}@(ul,dl)_{\alpha}
\restore
\ar@/^0.5pc/@{->}[r]^{\gamma}
\ar@/_0.5pc/@{->}[r]_{\gamma^*}
                         &
                         v^*
\save !<4pt,0cm>
\ar@{->}@(ur,dr)^{\alpha^*}
\restore
}
\end{equation*}

Theorem 1 from \cite[]{ser_izv} (or see \cite[Theorem 3.1]{ser_iso}) ensures the following statement:
\begin{equation}
\label{pot}
\parbox[c]{0.84\textwidth}{Let $\mc P=\mc P_1\oplus\dots\oplus\mc P_t=
\mc R_1\oplus\dots\oplus\mc R_t$. Let all indecomposable direct summands of $\un{\mc P_i}$ be not isomorphic to all indecomposable direct summands of $\un{\mc R_j}$ if $i\ne j$.
Then ${\mc P_i}$ is isomorphic to ${\mc R_i}$ for all $i$.
}
\end{equation}

The sesquilinear form $\mc F:V\times V\to \ff$  from the pair \eqref{rmr} is given by a  nonsingular diagonal matrix $F=\diag(\lambda_1,\dots,\lambda_n)$  in some basis of $V$.
Let $H=\diag(\mu_1,\dots,\mu_n)$ be its matrix in another basis. Then $S^{\as}FS=H$ for some nonsingular $S\in\ff^{n\times n}$. Hence $F^{-\as}F$ and
$H^{-\as}H$ are similar via $S$, and so there is a renumbering of $\mu_1,\dots,\mu_n$ such that
$\ti{\lambda}_i^{-1}\lambda_i =
\ti{\mu}_i^{-1}\mu_i$ for $i=1,\dots,n$. We have
$\lambda_i\ba{\mu}_i =\ba{\lambda}_i
\mu_i\in\ff_0$ and
\begin{equation*}\label{vgf}
 \lambda_i\ff_0=  \lambda_i(\lambda_i\ba{\mu}_i)^{-1}\ff_0
 =\ba{\mu}_i^{-1}\ff_0=
\ba{\mu}_i^{-1}\ba{\mu}_i{\mu}_i\ff_0
={\mu}_i \ff_0.
\end{equation*}

Therefore, $t$, $e_1,\dots,e_t$, and $\dim V_1,\dots,\dim V_t$ in \eqref{rmr} are uniquely determined by $\mc F$. Let us consider the decomposition \eqref{rmr} and another decomposition
\begin{equation*}\label{rmj}
(\mc A,\mc F)_V=(\mc B_1,e_1\mc H_1)_{W_1}\oplus\dots\oplus (\mc B_t,e_t\mc H_t)_{W_t},
\end{equation*}
in which the forms $\mc H_1,\dots,\mc H_t$ are Hermitian.
Let
\begin{equation}\label{ssd}
\mc P=\mc P_1\oplus\dots\oplus\mc P_t\quad
\text{and}\quad\mc P=\mc R_1\oplus\dots\oplus\mc R_t
\end{equation}
be
the corresponding decompositions of the representation $\mc P$ of the mixed graph $G$.
Then $\mc P_i:\xymatrix@R=6pt{V_i
\save !<5pt,0cm>
\ar@{-}@(ur,dr)^{e_i\mc F_i}
\restore
\save !<-5pt,0cm>
\ar@{->}@(ul,dl)_{\mc A_i}
\restore
}$
and
$\mc R_j:\xymatrix@R=6pt{W_j
\save !<5pt,0cm>
\ar@{-}@(ur,dr)^{e_i\mc H_j}
\restore
\save !<-5pt,0cm>
\ar@{->}@(ul,dl)_{\mc B_j}
\restore
}$
The corresponding representations
\[
\un{\mc P_i}:\xymatrix@R=6pt{V_i
\save !<-2pt,0cm>
\ar@{->}@(ul,dl)_{\mc A_i}
\restore
\ar@/^0.5pc/@{->}[rr]^{e_i\mc F_i}
\ar@/_0.5pc/@{->}[rr]_{\ba{e}_i\mc F_i}
                         &&
                         V_i^{\as}
\save !<4pt,0cm>
\ar@{->}@(ur,dr)^{\mc A_i^{\as}}
\restore
}
\qquad
\un{\mc R_j}:\xymatrix@R=6pt{W_j
\save !<-5pt,0cm>
\ar@{->}@(ul,dl)_{\mc B_j}
\restore
\ar@/^0.5pc/@{->}[rr]^{e_j\mc H_j}
\ar@/_0.5pc/@{->}[rr]_{\ba{e}_j\mc H_j}
                         &&
                         W_j^{\as}
\save !<6pt,0cm>
\ar@{->}@(ur,dr)^{\mc B_j^{\as}}
\restore
}
\]
of the quiver $\un G$ are isomorphic to
\begin{equation}\label{kjk}
\xymatrix@=39pt{V_i
\save !<-2pt,0cm>
\ar@{->}@(ul,dl)_{\mc A_i}
\restore
\ar@/^0.5pc/@{->}[r]^{\ba{e}_i^{-1}e_i1_{V_i}}
\ar@/_0.5pc/@{->}[r]_{1_{V_i}}
                         &
                         V_i
\save !<2pt,0cm>
\ar@{->}@(ur,dr)^{\mc F_i^{-1}\mc A_i^{\as}\mc F_i}
\restore
}
\quad
\xymatrix@=39pt{W_j
\save !<-5pt,0cm>
\ar@{->}@(ul,dl)_{\mc B_j}
\restore
\ar@/^0.5pc/@{->}[r]^{\ba{e}_j^{-1}e_j1_{W_j}}
\ar@/_0.5pc/@{->}[r]_{1_{W_j}}
                         &
                         W_i
\save !<5pt,0cm>
\ar@{->}@(ur,dr)^{\mc H_j^{-1}\mc B_j^{\as}\mc H_j}
\restore
}
\end{equation}

Let $i\ne j$. Then $\ba{e}_i^{-1}e_i\ne \ba{e}_j^{-1}e_j$. By Jordan canonical form under similarity, the representations \eqref{kjk} have no common indecomposable summands. By \eqref{pot}, the summands $\mc P_i$ and $\mc R_i$ in \eqref{ssd} are isomorphic for all $i$.
Hence,
the summands in \eqref{rmr} are uniquely determined up to permutations and isomorphisms.
\end{proof}

Let us apply Lemma \ref{iuj1} to the group
\[
\mc U(\mc F):=\{\mc A:V\to V\,|\,  \mc F(\mc A u,\mc A v)=\mc F(u, v)\text{ for all }u,v\in V \}
\]
of isometric operators on the complex vector space $V$ with  nondegenerate diagonalizable sesquilinear form $\mc F$. The group $\mc U(\mc F)$ is the group of isometries of  $\mc F$. (\DJ okovi\'{c} \cite{dok} and Szechtman \cite{sz} study the structure of the group of isometries of an arbitrary bilinear form.)
The matrices of $\mc U(\mc F)$ form the matrix group
\[
U(D):=\{A\in\cc^{n\times n}\,|\,A^*DA=D\},
\]
in which $D$ is the matrix \eqref{njh4} of $\mc F$.
Its special case is the \emph{indefinite unitary group} (which is also called the \emph{pseudo-unitary group})
\[
U(p,q):=\{A\in\cc^{(p+q)\times (p+q)}\,|\,A^*I_{p,q}A=I_{p,q}\},
\quad
I_{p,q}:=\mat{I_{p}&0\\0&-I_{q}}.
\]

For each  $n\times n$ complex matrix $E$, we define the Lie matrix algebra
\[
S(E):=\{A\in\cc^{n\times n}\,|\,EA=  -A^*E\}.
\]

\begin{corollary}[of Lemma \ref{iuj1}]\label{uyn}
{\rm(a)}  The group of isometric operators on a complex vector space with nondegenerate diagonalizable sesquilinear form is isomorphic to a direct product of indefinite unitary groups. If $D$ is the diagonal matrix \eqref{njh1}, then
\begin{equation*}\label{drj}
U(D)=U(p_1,q_1)\times\dots\times U(p_r,q_r).
\end{equation*}

{\rm(b)}  The Lie algebra of skewadjoint  operators on a complex vector space with nondegenerate diagonalizable sesquilinear form is isomorphic to a direct product of Lie
algebras of skewadjoint  operators on indefinite inner product spaces. If $D$ is the diagonal matrix \eqref{njh1}, then
\begin{equation*}\label{drj1}
S(D)=
S(I_{p_1,q_1})\times\dots\times S(I_{p_r,q_r}).
\end{equation*}

\vskip-2em
 \hfill \hbox{\qedsymbol}
\end{corollary}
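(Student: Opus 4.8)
The plan is to read everything off Lemma~\ref{iuj1}. Fix $\ff=\cc$ and, using Lemma~\ref{iuj1}(b), choose a basis of $V$ in which $\mc F$ is given by the matrix $D=e_1I_{p_1,q_1}\oplus\dots\oplus e_tI_{p_t,q_t}$ of \eqref{njh4}, with the $e_l$ distinct and each $p_l+q_l\ge1$. In this basis the assertions become statements about the matrix sets $U(D)=\{A:A^{*}DA=D\}$ and $S(D)=\{A:DA=-A^{*}D\}$, and the group $\mc U(\mc F)$ (resp.\ the Lie algebra of skewadjoint operators) is identified with $U(D)$ (resp.\ $S(D)$).

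For part~(a): by Lemma~\ref{iuj1}(a) every $A\in U(D)$ is block diagonal, $A=A_1\oplus\dots\oplus A_t$, with $A_l$ of the size of $I_{p_l,q_l}$. Writing $A^{*}DA=D$ blockwise and using that $D$ is block diagonal with scalar blocks $e_lI_{p_l,q_l}$, the relation decouples into $A_l^{*}(e_lI_{p_l,q_l})A_l=e_lI_{p_l,q_l}$ for each $l$; since $e_l\ne0$ this is exactly $A_l\in U(p_l,q_l)$. Conversely, any block diagonal $A_1\oplus\dots\oplus A_t$ with $A_l\in U(p_l,q_l)$ satisfies $A^{*}DA=D$. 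Hence $A\mapsto(A_1,\dots,A_t)$ is a bijection $U(D)\to U(p_1,q_1)\times\dots\times U(p_t,q_t)$; it is a group homomorphism because the product of two block diagonal matrices with the same block sizes is block diagonal with blocks the corresponding products. This is the claimed isomorphism, and $t,p_l,q_l$ here are the $r,p_l,q_l$ of the statement.

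For part~(b): the argument is the same with ``isometric'' replaced by ``skewadjoint'' in Lemma~\ref{iuj1}(a). Every $A\in S(D)$ is block diagonal $A=A_1\oplus\dots\oplus A_t$, and $DA=-A^{*}D$ decouples into $e_lI_{p_l,q_l}A_l=-A_l^{*}e_lI_{p_l,q_l}$; cancelling the nonzero scalar $e_l$ gives $I_{p_l,q_l}A_l=-A_l^{*}I_{p_l,q_l}$, i.e.\ $A_l\in S(I_{p_l,q_l})$, and conversely a block diagonal sum of such $A_l$ lies in $S(D)$. The map $A\mapsto(A_1,\dots,A_t)$ is linear and respects commutators blockwise, hence is the desired Lie-algebra isomorphism.

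The only substantive input is Lemma~\ref{iuj1}, which is exactly what forces an isometric or skewadjoint operator to be block diagonal relative to the decomposition indexed by the distinct cosets $e_l$; granting that, there is no real obstacle, since the scalars $e_l$ never interfere: being nonzero, they cancel from both sides of the defining relations. The one point worth stating carefully is that ``$=$'' in the displayed formulas is the identification of a matrix with its tuple of diagonal blocks (the direct product being realized by the block diagonal embedding), and that the block sizes agree on both sides, which is what makes multiplication and bracketing componentwise.
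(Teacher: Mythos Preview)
Your proof is correct and matches the paper's approach: the paper gives no explicit argument at all (the statement ends directly with a \qedsymbol), treating the corollary as an immediate consequence of Lemma~\ref{iuj1}, and your write-up simply spells out the routine verification the reader is expected to supply. The one cosmetic discrepancy is that the corollary's displayed formulas reference the matrix~\eqref{njh1} while using the parameters $p_l,q_l$ from~\eqref{njh4}; you correctly work with~\eqref{njh4}, which is the intended reading over~$\cc$.
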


\section{Classification of skewadjoint operators}
\label{s_self}

Every square matrix over
$\ff$ is similar to a
direct sum, determined uniquely up to permutation of summands, of {\it Frobenius
blocks}
\[
\begin{bmatrix} 0&&0
&-c_0\\1&\ddots&&-c_1
\\&\ddots&0&\vdots\\
0&&1& -c_{n-1} \end{bmatrix}
\]
whose characteristic
polynomials
$\chi(x) =c_0+c_1x+\dots+c_{n-1}x^{n-1}+x^n
$
are integer powers of irreducible
polynomials.

Let ${\cal O}_{\mathbb F}$ be obtained from the set of Frobenius blocks over $\ff$ by replacing each Frobenius block by a similar matrix. Thus, the characteristic
polynomial $\chi_{\Phi}(x)$ of $\Phi\in {\cal O}_{\mathbb F}$ is an integer power of an irreducible
polynomial $p_{\Phi}(x)$. For example, ${\cal O}_{\mathbb F}$ can consist of all Frobenius blocks if $\ff$ is an arbitrary field; ${\cal O}_{\mathbb F}$ can consist of all Jordan blocks if $\ff=\cc$.

For each
$\Phi\in \mc O_{\ff}$ and $\varepsilon,\zeta\in\{1,-1\}$,  if there
exists a nonsingular matrix
$M$ satisfying
\begin{equation}\label{jffc}
M=\varepsilon  M^{\as}, \qquad
M\Phi=\varepsilon\zeta  (M\Phi)^{\as},
\end{equation}
 then
we choose one and denote
it by $\Phi_{\varepsilon\zeta}$.
The existence conditions and
explicit form of $\Phi_{\varepsilon\zeta}$ for Frobenius blocks $\Phi$ are given in Lemma \ref{THEOREM 8}.

For each polynomial $f(x)=a_0+a_1x+\dots+a_{n}x^{n}\in\ff[x]$, we write
\begin{equation*}\label{ook}
\ba f(x)=\ba a_0+\ba a_1x+\dots+\ba a_{n}x^{n}.
\end{equation*}

Selfadjoint operators on  a vector space $V$ with  a nondegenerate symmetric or Hermitian form over a field $\ff$ of characteristic different from 2 are classified in \cite[Theorem 6]{ser_izv} up  to classification of symmetric and Hermitian forms over finite extensions of $\ff$.
The following theorem is an extended version of \cite[Theorem 6]{ser_izv}, which is supplemented by skewadjoint operators. It is formulated as \cite[Theorem 6]{ser_izv}.

\begin{theorem}\label{Theorem 6}
Let $\mc A$ be a $\zeta $-adjoint
operator on a vector
space $V$ with nondegenerate
$\varepsilon$-Hermitian form
$\mc F$ over a field $\ff$
of characteristic different from $2$ with
involution, where $\varepsilon,\zeta \in\{1,-1\}$ and
$\varepsilon= 1$ for
nonidentity involution on
$\ff$ (see \eqref{jna}). Then there
exists a basis of\/ $V$ in which the
pair $(\mc A,\mc  F)_V$ is given by a
direct sum of matrix pairs of
the following types:
\begin{itemize}
  \item[\rm(i)]
$A^{f(x)}_{\Phi}:=(\Phi,
\Phi_{\varepsilon\zeta}f(\Phi))$, in which $\Phi\in \mc O_{\ff}$
is a matrix for which
$\Phi_{\varepsilon\zeta}$ exists, $0\ne
f(x)=\ba f(\zeta x)\in \ff[x]$, and
$\deg(f(x))<\deg(p_{\Phi}(x))$.

  \item[\rm(ii)]
$\left(\mat{\Phi&0\\ 0&\zeta \Phi^{*}},\mat{
0&\varepsilon I_n\\I_n&0}\right)$, in which $\Phi\in \mc O_{\ff}$
is a matrix for which
$\Phi_{\varepsilon\zeta}$ does not
exist.

\end{itemize}

The summands are determined
to the following extent:
\begin{description}
  \item [Type (i)]
up to replacement of the
whole group of summands
$
A_{\Phi}^{f_1(x)}
\oplus\dots\oplus
A_{\Phi}^{f_s(x)}
$
with the same $\Phi$ by
$
A_{\Phi}^{g_1(x)}
\oplus\dots\oplus
  A_{\Phi}^{g_s(x)}
$
such that the Hermitian forms
\begin{equation}\label{bhy}
\begin{split}
&f_1(\omega)x_1^{\circ}x_1+\dots+
f_s(\omega)x_s^{\circ}x_s,
\\
&g_1(\omega)x_1^{\circ}x_1+\dots+
g_s(\omega)x_s^{\circ}x_s
\end{split}
\end{equation}
are equivalent over the
field
\begin{equation}\label{aza}
\ff[\omega]=\ff[x]/p_{\Phi}(x)\ff[x]\text{ with involution } f(\omega)\mapsto
f(\omega)^{\circ}= \ba
f(\zeta \omega).
\end{equation}

  \item [Type (ii)]
up to replacement of $\Phi$
by $\Psi\in \mc O_{\ff}$ with
$\chi_{\Psi}(x)
=\zeta^{\deg(\chi_{\Psi})}\ba\chi_{\Phi}(\zeta x)$.
\end{description}
\end{theorem}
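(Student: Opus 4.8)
The plan is to follow the regularizing-decomposition method of \cite{ser_izv}, adapted to $\zeta$-adjoint operators. The first step is to pass from the pair $(\mc A,\mc F)_V$ to the representation of the quiver $\un G$ obtained by doubling, exactly as in the proof of Theorem \ref{yyj}: the datum is the pair of linear maps $\mc F,\mc F^{\as}\colon V\to V^{\as}$ together with $\mc A\colon V\to V$ and $\mc A^{\as}\colon V^{\as}\to V^{\as}$, and the $\zeta$-adjointness relation $\mc F(u,\mc A v)=\zeta\mc F(\mc A u,v)$ translates into a commutativity constraint linking $\mc A$, $\mc A^{\as}$, $\mc F$ and $\mc F^{\as}$. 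Because $\mc F$ is nondegenerate, $\mc F$ is invertible, and one can eliminate $V^{\as}$: the pair becomes equivalent to the single operator $\mc A$ on $V$ together with the bilinear-type datum $F$, and the $\zeta$-adjointness says precisely that $F\mc A=\zeta(F\mc A)^{\as}$ while $F=\varepsilon F^{\as}$. So the classification reduces to the canonical form of a pair $(A,F)$ of $n\times n$ matrices with $F=\varepsilon F^{\as}$ nonsingular and $FA=\zeta\varepsilon (FA)^{\as}$ — equivalently a pair of forms, one $\varepsilon$-Hermitian and one with a $\zeta\varepsilon$-symmetry.

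The second step is the reduction to the "primary" components. Decompose $V=\bigoplus_{p}V_p$ by the primary decomposition of $\mc A$: for an irreducible polynomial $p(x)$, $V_p$ is the sum of generalized eigenspaces for $p$. The involution $f(x)\mapsto \ba f(\zeta x)$ on $\ff[x]$ sends $p(x)$ to (a scalar multiple of) some irreducible $p^{\sigma}(x)$; one checks that $\mc F$ pairs $V_p$ nondegenerately with $V_{p^\sigma}$ and annihilates $V_p\times V_q$ when $q\ne p^\sigma$. This splits the problem into two cases: $p^\sigma=p$, giving an isometric-type "polynomially self-dual" block that must be analyzed further, and $p^\sigma\ne p$, giving a hyperbolic block $V_p\oplus V_{p^\sigma}$ on which the canonical form is immediately $\left(\mat{\Phi&0\\0&\zeta\Phi^*},\mat{0&\varepsilon I\\I&0}\right)$ — this is type (ii), and the indeterminacy is exactly the replacement of $\Phi$ by a $\Psi$ with $\chi_\Psi(x)=\zeta^{\deg}\ba\chi_\Phi(\zeta x)$.

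The third and main step is the case $p^\sigma=p$ with a single $\Phi\in\mc O_\ff$ fixed. Here one works over the field $\ff[\omega]=\ff[x]/p_\Phi(x)\ff[x]$ with the induced involution $f(\omega)\mapsto\ba f(\zeta\omega)$ of \eqref{aza}. The key structural input is Lemma \ref{THEOREM 8}: $\Phi_{\varepsilon\zeta}$ exists iff the relevant obstruction vanishes, and when it does, every $(A,F)$-block with that $\Phi$ can be brought, by the change-of-basis transformations $(A,F)\mapsto(S^{-1}AS,S^{\as}FS)$, to the form $(\Phi,\Phi_{\varepsilon\zeta}f(\Phi))$ with $0\ne f(x)=\ba f(\zeta x)$ and $\deg f<\deg p_\Phi$ — this is type (i). One then has to pin down the indeterminacy: two such blocks $(\Phi,\Phi_{\varepsilon\zeta}f(\Phi))$ and $(\Phi,\Phi_{\varepsilon\zeta}g(\Phi))$ are isomorphic iff $f(\omega)x^\circ x$ and $g(\omega)x^\circ x$ are equivalent one-dimensional $\circ$-Hermitian forms over $\ff[\omega]$, and more generally a direct sum of $s$ such blocks corresponds to the diagonal $\circ$-Hermitian form $\sum f_i(\omega)x_i^\circ x_i$; two such sums are isomorphic iff these forms are equivalent over $\ff[\omega]$. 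This last point is the crux and the main obstacle: one must translate "isomorphism of the pair $(\mc A,\mc F)$ restricted to an isotypic component" into "equivalence of Hermitian forms over the residue field $\ff[\omega]$", which requires identifying the automorphism group of $(\Phi,\Phi_{\varepsilon\zeta})$ with the $\circ$-unitary group over $\ff[\omega]$ and invoking Witt-type cancellation; this is where the argument of \cite[Theorem 6]{ser_izv} is genuinely used, and the only new work is checking that it goes through verbatim with $\zeta=-1$, i.e.\ for skewadjoint operators, since the sign $\zeta$ enters only through the involution $f(x)\mapsto\ba f(\zeta x)$ and the symmetry relation $M\Phi=\varepsilon\zeta(M\Phi)^{\as}$, both of which are already accommodated in \eqref{jffc} and \eqref{aza}.

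Finally, assembling the primary components and using the uniqueness statement \eqref{pot} (applied as in the proof of Theorem \ref{yyj}, since blocks with non-associate $\Phi$ have no common indecomposable summands after doubling) yields existence and the stated uniqueness. The passage \eqref{jna} lets us ignore skew-Hermitian $\mc F$ in the nonidentity-involution case, so the hypothesis $\varepsilon=1$ there costs nothing.
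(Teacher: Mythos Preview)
Your proposal is correct and reaches the same conclusion, but it is organized rather differently from the paper's proof.

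The paper proceeds entirely inside the abstract framework of \cite{ser_izv}: it writes down the mixed graph $G$ with relations $\gamma\alpha=\zeta\alpha^*\gamma$, $\gamma=\varepsilon\gamma^*$, $\det\gamma\ne 0$, passes to the doubled quiver $\underline G$, and then mechanically computes the data that \cite[Theorem~1]{ser_izv} requires: the set $\ind(\underline G)$ of indecomposables (which are the triples $(\Phi,I,\zeta\Phi)$ with $\Phi\in\mc O_{\ff}$), the partition into $\ind_0(\underline G)$ (those $\Phi$ for which $\Phi_{\varepsilon\zeta}$ exists) and $\ind_1(\underline G)$ (those for which it does not), and the endomorphism algebra $\End(A_\Phi)$, which it identifies with $\ff[\omega]$ carrying the involution $f(\omega)\mapsto\ba f(\zeta\omega)$. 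Then \cite[Theorem~1]{ser_izv} is invoked as a black box and delivers both existence and the precise uniqueness statement. No primary decomposition, no orthogonality argument, no explicit hyperbolic construction appears---all of that is absorbed into the cited theorem.

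Your route is the more classical, hands-on version: decompose $V$ along the primary components of $\mc A$, check directly that $\mc F$ pairs $V_p$ with $V_{p^\sigma}$ (where $\sigma$ is the involution $p(x)\mapsto\ba p(\zeta x)$ on irreducibles), read off the hyperbolic type (ii) blocks from the pairs $p\ne p^\sigma$, and then analyze the self-dual pieces over the residue field $\ff[\omega]$. This makes the geometry transparent and explains \emph{why} the dichotomy (i)/(ii) arises, at the cost of redoing by hand some of what \cite[Theorem~1]{ser_izv} already packages. Your reference to ``Witt-type cancellation'' for the uniqueness of the type (i) summands is a slight under-description---what is actually needed is exactly the orbit/uniqueness statement of \cite[Theorem~1]{ser_izv}, which is what the paper invokes and what you also ultimately defer to---but since you cite that source for the crux, there is no genuine gap. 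Both approaches agree that the only new content over \cite[Theorem~6]{ser_izv} is allowing $\zeta=-1$, which enters only through the involution on $\ff[x]$ and the sign in \eqref{jffc}.
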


\begin{proof}
This proof is a slight modification of the proof of \cite[Theorem 6]{ser_izv}, which uses the method of reducing the problem of classifying systems of forms and linear mappings to the problem of classifying systems of linear mappings. This method is developed in \cite{ser_izv} and is applied to the problems of classifying bilinear/sesquilinear forms, pairs of symmetric/skewsymmetric/Hermitian forms, and isometric/selfadjoint operators on a vector space with nondegenerate symmetric/skewsymmetric/Hermitian form in \cite[Theorems 3--6]{ser_izv}.
It is presented in detail in \cite{ser_iso} (see also \cite{hor-ser_can1,hor-ser_mixed,mel,ser_brazil}).

The pair $(\mc A, \mc F)_V$ satisfies the conditions
\[
\mc F(u,\mc Av)=\zeta   \mc F(\mc Av,u)
,\qquad\mc F(u,v)=\varepsilon \widetilde{\mc F(v,u)},\qquad
 \det \mc F\ne 0;
\]
therefore, it
defines the representation $\xymatrix@R=6pt{V
\save !<3pt,0cm>
\ar@{-}@(ur,dr)^{\mc F}
\restore
\save !<-4pt,0cm>
\ar@{->}@(ul,dl)_{\mc A}
\restore
}$ of the mixed graph with relations
\begin{equation}\label{rhn}
G:  \xymatrix@R=6pt{v
\ar@{-}@(ur,dr)^{\gamma }
\ar@{->}@(ul,dl)_{\alpha }
}  \qquad
\gamma \alpha=\zeta \alpha^*\gamma ,\ \
\gamma=\varepsilon \gamma^*,\ \
 \det \gamma\ne 0.
\end{equation}
Its quiver with involutions is
\begin{equation}\label{bhd2}
\underline{G}: \xymatrix@R=6pt{v
\save !<-2pt,0cm>
\ar@{->}@(ul,dl)_{\alpha}
\restore
\ar@/^0.5pc/@{->}[rr]^{\gamma}
\ar@/_0.5pc/@{->}[rr]_{\gamma^*}
                         &&
                         v^*
\save !<4pt,0cm>
\ar@{->}@(ur,dr)^{\alpha^*}
\restore
}  \qquad
\gamma \alpha=\zeta \alpha^*\gamma ,\ \
\gamma=\varepsilon \gamma^*,\ \
 \det \gamma\ne 0.
\end{equation}

$1^{\circ}$ Let us describe the set $\ind(\un{G})$ of nonisomorphic indecomposable matrix representations
of $\un{G}$. Each matrix representation of
\eqref{bhd2} is of the form
\begin{equation*}\label{vdw}
\xymatrix@R=6pt{\ff^n
\save !<-7pt,0cm>
\ar@{->}@(ul,dl)_{A_{\alpha}}
\restore
\ar@/^0.5pc/@{->}[rr]^{A_{\gamma}}
\ar@/_0.5pc/@{->}[rr]_{A_{\gamma^*}=\varepsilon A_{\gamma}}
                         &&
                         \ff^n
\save !<7pt,0cm>
\ar@{->}@(ur,dr)^{A_{\alpha^*}}
\restore
}  \quad
A_{\gamma }A_{\alpha} =\zeta
A_{\alpha^*}A_{\gamma} ,\ \
 \det A_{\gamma}\ne 0;
\end{equation*}
we give it by the triple
$(A_{\alpha}, A_{\gamma},
A_{\alpha^*})$ of $n\times n$ matrices, in which $A_{\gamma}$ is
nonsingular and
$A_{\gamma }A_{\alpha} =\zeta
A_{\alpha^*}A_{\gamma}.$
The adjoint representation is
given by $$(A,B,C)^{\circ} =
(C^{\as},\varepsilon B^{\as},A^{\as}).$$

Every matrix representation of the
quiver $\un{G}$ is isomorphic to one
of the form $(A, I, \zeta A)$. The
set $\ind(\un{G})$ consists of the matrix
representations $(\Phi, I,
\zeta \Phi)$ in which $\Phi\in \mc O_{\ff}$.

$2^{\circ}$. Let us find the sets
$\ind_0(\un{G})$
and $\ind_1(\un{G})$. We have that
\[
(\Psi,I,\zeta \Psi)\simeq
(\Phi,I,\zeta \Phi)^{\circ} =
(\zeta \Phi^{\as},I,\Phi^{\as})
\]
if and only if $\Psi$ is
similar to $\zeta \Phi^{\as}$,
if and only if $\det (xI-\Psi)=\det (xI-\zeta \Phi^{\as})=
\det (xI-\zeta \ba{\Phi})=
\pm\det (\zeta xI- \ba{\Phi})$, if and only if
$\chi_{\Psi}(x)
=\pm\ba\chi_{\Phi}(\zeta x)$.

Suppose that $(\Phi,I,\zeta \Phi)$ is
isomorphic to a selfadjoint matrix
representation. By \cite[Lemma
6]{ser_izv}, there exists
an isomorphism
\[
 h=[I, H]:
 (\Phi,I,\zeta \Phi)\to (A,B,A^{\as}),
 \qquad B=\varepsilon B^{\as}.
\]
Then
\[
A = \Phi,\quad B = H,\quad B^{\as}
=\varepsilon H,\quad
A^{\as}H =\zeta H\Phi;
\]
i.e.,
\[
B =\varepsilon  B^{\as},
 \qquad
B\Phi =\zeta \Phi^{\as} B=
\varepsilon \zeta (B\Phi)^{\as}.\]
By \cite[Lemma 8]{ser_izv},  if $\Phi$ is nonsingular, then  $\varepsilon =1$ or $\varepsilon \zeta=1$ (i.e.,  $\varepsilon =1$ or $\varepsilon =\zeta =-1$); if $\Phi=J_n(0)$, then
$\varepsilon = 1$  for $n$ odd
and $\varepsilon \zeta = 1$ for $n$
even.

We can
take $B=\Phi_{\varepsilon \zeta}$.
Thus, the set
$\ind_0(\un{G})$
consists of the matrix
representations $A_{\Phi}: =
(\Phi,\Phi_{\varepsilon \zeta},\Phi^{\as}),$ in which
$\Phi\in \mc O_{\ff}$ is a matrix for which
$\Phi_{\varepsilon \zeta}$ exists. The set
$\ind_1(\un{G})$
consists of the matrix
representations
$(\Phi,I,\zeta \Phi)$, in which
$\Phi\in \mc O_{\ff}$ is a matrix for which
$\Phi_{\varepsilon \zeta}$ does not exist, and $\chi_{\Phi}(x)$ is
determined up to replacement
by $\pm \ba\chi_{\Phi}(\zeta x)$.

$3^{\circ}$. Let us describe the
orbits of representations
from $\ind_0(\un{G})$. Let $g=[G_1, G_2]\in
\End(A_{\Phi}).$ Then
\[
\Phi G_1 = G_1\Phi,\quad
\Phi_{\varepsilon \zeta} G_1 = G_2\Phi_{\varepsilon \zeta},\quad
\Phi^{\as} G_2 = G_2\Phi^{\as}.
\]
Since $G_1$ commutes with $\Phi$, which is similar to a
Frobenius block, we have
\begin{align*}
G_1 &= f(\Phi)\quad \text{for some }f(x)\in
 \ff[x],
 \\ G_2 &=
\Phi_{\varepsilon \zeta} f(\Phi) \Phi_{\varepsilon \zeta}^{-1}=
f(\Phi_{\varepsilon \zeta} \Phi \Phi_{\varepsilon \zeta}^{-1})=
f(\zeta\Phi^{\as}).
\end{align*}

Consequently,
$\End(A_{\Phi})=\{
[f(\Phi),f(\zeta\Phi^{\as})]\,|\,
f(x)\in \ff[x]\}$ is the algebra
with involution
\[
[f(\Phi),f(\zeta\Phi^{\as})]^{\circ}=
[\ba f(\zeta\Phi),f(\Phi)^{\as}].
\]
The field $T(A_{\Phi})$ can
be identified with the field
$ \ff[\omega] =  \ff[x]/p_{\Phi}(x) \ff[x]$ with involution
$f(\omega)^{\circ} = \ba
f(\zeta\omega)$. The orbit of $A_{\Phi}$ consists of the matrix
representations
$A_{\Phi}^{f(\omega)}:\xymatrix@R=6pt{\bullet
\save !<3pt,0cm>
\ar@{-}@(ur,dr)^{\Phi_{\varepsilon\zeta}f(\Phi)}
\restore
\save !<-4pt,0cm>
\ar@{->}@(ul,dl)_{\Phi}
\restore
}$
 of
\eqref{rhn} given by nonzero polynomials $f(x)\in \ff[x]$ of degree $<\deg(p_{\Phi}(x))$ that satisfy
$f(x)=\ba f(\zeta x)$.

$4^{\circ}$. From
$2^{\circ}$,  $3^{\circ}$,
and \cite[Theorem
1]{ser_izv},
the proof of Theorem
\ref{Theorem 6} now follows.
\end{proof}

Note that there is an unexpected bijective correspondence between all pairs $(\mc A,\mc  F)_V$ from Theorem \ref{Theorem 6} and all pairs
consisting of an $\varepsilon \zeta $-Hermitian form and a nondegenerate $\zeta $-Hermitian form on $V$. The pair corresponding to  $(\mc A,\mc  F)_V$ is  $(\mc E,\mc  F)_V$, in which  $\mc E:V\times V\to\ff$ is defined by
\[
\mc E(u,v):=\mc F(u,\mc Av)\text{\qquad for all } u,v\in V.
\]
This form is $\varepsilon \zeta $-Hermitian since
\[
\mc E(u,v)=\mc F(u,\mc Av)=\zeta\mc F(\mc Au,v)=
\varepsilon\zeta\widetilde{\mc F(v,\mc Au)}=
\varepsilon\zeta\widetilde{\mc E(v,u)}.
\]
Therefore, Theorem \ref{Theorem 6} could be derived from \cite[Theorem 4]{ser_izv}, in which the pairs of Hermitian, or symmetric, or skew-symmetric forms over a field of characteristic different from 2 are classified. However, the direct proof of Theorem \ref{Theorem 6} helps to construct simpler canonical forms.

The existence conditions and
explicit form of $\Phi_{\varepsilon\zeta}$ for Frobenius blocks $\Phi$ are given in the following lemma, which is an extended version of \cite[Theorem 8]{ser_izv}.
By \eqref{jffc}, $\Phi_{\varepsilon\zeta}$ is defined by the equalities
\begin{equation}\label{nwn}
\Phi_{\varepsilon\zeta}=\varepsilon  \Phi_{\varepsilon\zeta}^{\as}, \qquad
\Phi_{\varepsilon\zeta}\Phi=\varepsilon\zeta  (\Phi_{\varepsilon\zeta}\Phi)^{\as}.
\end{equation}

\begin{lemma}
\label{THEOREM 8}
Let ${\Phi}$ be a Frobenius block with the characteristic polynomial
\begin{equation}\label{xdn}
\chi_{\Phi}(x) =c_0+c_1x+\dots+c_{n-1}x^{n-1}+x^n
\end{equation}
 over a field\/ $\ff$
of characteristic different from $2$ with
involution. Let $\varepsilon,\zeta\in\{1,-1\}$ and $\varepsilon=1$ if the involution on $\ff$ is nonidentity.

{\rm(a)} If $\Phi$ is nonsingular, then ${\Phi}_{\varepsilon\zeta}$ exists if and only if
\begin{equation}\label{njnj}
(\varepsilon,\zeta)\ne (-1,1)\quad
\text{and}\quad p_{\Phi}(x) =
\zeta^{\deg(p_{\Psi})}\ba p_{\Phi}
(\zeta x).
\end{equation}
With these conditions
satisfied, we can take
\begin{equation}\label{dxdf}
{\Phi}_{\varepsilon\zeta}=
[\zeta^i a_{i+j}]_{i,j=1}^n
=\mat{\\[-26pt]a_2&a_3&\iddots&a_{n+1}\\
\zeta a_3&\iddots&\zeta a_{n+1}&\zeta a_{n+2}\\\iddots
&\iddots&\iddots&\iddots\\
\zeta^n a_{n+1}&\zeta^na_{n+2}&\iddots&\zeta^na_{2n}
},
\end{equation}
in which
\begin{equation}\label{nhb}
(a_2,\dots,a_{n+1}):=
\begin{cases}
 (1,0,0,\dots,0)&\text{if }\varepsilon =1,\\
(0,1,0,\dots,0)&\text{if }\varepsilon  =-1,
\end{cases}
\end{equation}
and
\begin{equation}\label{edc1}
a_{l+n}:=
-c_0a_{l} -
c_1a_{l+1}-\dots-c_{n-1}a_{l+n-1}
\qquad\text{for } l=2,\dots, n.
\end{equation}

{\rm(b)} If $\Phi$ is singular, then ${\Phi}_{\varepsilon\zeta}$ exists if and only if
\begin{equation}\label{eqk}
\text{$\varepsilon =1$ for $n$ odd, and $ \varepsilon=\zeta$ for $n$ even}.
\end{equation}
With these conditions
satisfied, we can take
${\Phi}_{\varepsilon\zeta}=  Z_n(\zeta)$, which is defined in \eqref{mmx}.
\end{lemma}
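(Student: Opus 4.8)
The plan is to follow the proof of \cite[Theorem 8]{ser_izv}, inserting the sign $\zeta$ into the recurrence \eqref{edc1} and into the row-scaling $\zeta^{i}$ of \eqref{dxdf}; no new idea is needed, only careful bookkeeping of the powers of $\zeta$. The first step is to rewrite \eqref{nwn}: since $\varepsilon^{2}=1$, the relation $\Phi_{\varepsilon\zeta}=\varepsilon\Phi_{\varepsilon\zeta}^{\as}$ is equivalent to $\Phi_{\varepsilon\zeta}^{\as}=\varepsilon\Phi_{\varepsilon\zeta}$, and substituting this into the second relation of \eqref{nwn} turns it into $\Phi_{\varepsilon\zeta}\Phi=\zeta\Phi^{\as}\Phi_{\varepsilon\zeta}$; conversely the pair of relations $M=\varepsilon M^{\as}$, $M\Phi=\zeta\Phi^{\as}M$ implies \eqref{nwn}. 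So $\Phi_{\varepsilon\zeta}$ exists iff there is a nonsingular $M$ with $M=\varepsilon M^{\as}$ and $M\Phi=\zeta\Phi^{\as}M$. The second relation forces $\Phi$ to be similar to $\zeta\Phi^{\as}$; comparing characteristic polynomials, using $\chi_{\zeta\Phi^{\as}}(x)=\zeta^{n}\ba\chi_{\Phi}(\zeta x)$ (valid because $\zeta=\pm1$) together with the multiplicativity of $f(x)\mapsto\zeta^{\deg f}\ba f(\zeta x)$, the fact that $\chi_{\Phi}$ is a power of $p_{\Phi}$ yields $p_{\Phi}(x)=\zeta^{\deg p_{\Phi}}\ba p_{\Phi}(\zeta x)$ when $\Phi$ is nonsingular (when $\Phi$ is singular, $\chi_{\Phi}=x^{n}$ and this is automatic). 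This is the polynomial part of \eqref{njnj}.

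For sufficiency in part (a) I would take $M$ to be the matrix \eqref{dxdf} and check the two relations directly. The relation $M\Phi=\zeta\Phi^{\as}M$ holds as soon as \eqref{edc1} and the polynomial identity $\ba c_{k}=\zeta^{n-k}c_{k}$ (the condition in \eqref{njnj}) hold: a direct computation with the Frobenius form of $\Phi$ gives $M\Phi=[\zeta^{i}a_{i+j+1}]=\zeta\Phi^{\as}M$, the last column of $\Phi$ being absorbed by \eqref{edc1}. The relation $M=\varepsilon M^{\as}$, after writing $M=\diag(\zeta,\dots,\zeta^{n})\,[a_{i+j}]$ and using that $i-j\equiv i+j\pmod 2$, is equivalent to the single family of scalar identities $\zeta^{m}a_{m}=\varepsilon\ba a_{m}$ for $m=2,\dots,2n$; by the initial data \eqref{nhb} these hold for $m\le n+1$ exactly when $(\varepsilon,\zeta)\ne(-1,1)$ (the binding case being $m=3$ with $\varepsilon=-1$), and applying the involution to \eqref{edc1} together with $\ba c_{k}=\zeta^{n-k}c_{k}$ propagates them to all $m$ by induction. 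Nonsingularity of \eqref{dxdf} follows as in \cite[Theorem 8]{ser_izv}: for nonsingular $\Phi$ the Hankel matrix $[a_{i+j}]$ is the Gram matrix of a nondegenerate pairing on $\ff[x]/\chi_{\Phi}(x)$, equivalently the minimal linear recurrence of $(a_{m})_{m\ge2}$ has degree $n$. For part (b), $\chi_{\Phi}=x^{n}$: taking $M=Z_{n}(\zeta)$, whose $(k,n+1-k)$ entry is $\zeta^{k-1}$, the relation $M=\varepsilon M^{\as}$ reduces on the anti-diagonal to $\varepsilon=\zeta^{n+1}$, i.e.\ \eqref{eqk}, while $M\Phi=\zeta\Phi^{\as}M$ and the nonsingularity of $Z_{n}(\zeta)$ are immediate.

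For necessity of the remaining sign and parity conditions, the case $\zeta=1$ is \cite[Theorem 8]{ser_izv} (equivalently \cite[Lemma 8]{ser_izv}, already invoked in the proof of Theorem~\ref{Theorem 6}), which I would cite. For $\zeta=-1$ the nonsingular condition $(\varepsilon,\zeta)\ne(-1,1)$ is vacuous, so only \eqref{eqk} in the singular case remains: here $p_{\Phi}=x$, so I may take $\Phi$ to be the Frobenius block with $\chi_{\Phi}=x^{n}$, and the relations $Me_{k+1}=\zeta\Phi^{\as}Me_{k}$ ($k<n$) and $\Phi^{\as}Me_{n}=0$, together with the nonsingularity of $M$, force $M$ to be anti-triangular with $(i,n+1-i)$ entry equal to $\zeta^{i-1}\gamma$ for a fixed $\gamma\ne0$; then $M=\varepsilon M^{\as}$ forces $\ba\gamma=\varepsilon\zeta^{n+1}\gamma$, which for the identity involution gives $\varepsilon=\zeta^{n+1}$, i.e.\ \eqref{eqk}.

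The part I expect to be the main obstacle is keeping the powers of $\zeta$ consistent everywhere, so that the anti-diagonal and Hankel parity bookkeeping --- both in the sufficiency computation and in the cyclic-vector argument for necessity --- returns exactly the conditions \eqref{njnj} and \eqref{eqk} and not a shifted variant; the remaining verifications are routine linear-recurrence identities and matrix multiplications.
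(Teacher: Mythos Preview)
Your proposal follows essentially the same route as the paper: rewrite \eqref{nwn} as $M=\varepsilon M^{\as}$ and $M\Phi=\zeta\Phi^{\as}M$, derive the polynomial condition from $\Phi\sim\zeta\Phi^{\as}$, reduce the verification of \eqref{nwn} for the explicit matrix \eqref{dxdf} to the scalar identities $a_t=\varepsilon\zeta^{t}\ba a_t$ (your $\zeta^{m}a_m=\varepsilon\ba a_m$) established by induction on the recurrence \eqref{edc1}, and appeal to \cite{ser_izv} for the necessity of the sign/parity constraints. The only cosmetic differences are that the paper checks nonsingularity of \eqref{dxdf} by the elementary observation that $c_1=0$ when $\varepsilon=-1$ (since then $\zeta=-1$ and $\chi_{\Phi}\in\ff[x^2]$) rather than via Hankel/recurrence theory, and that the paper cites \cite[Lemma~8]{ser_izv} directly for the singular necessity in all cases rather than redoing the $\zeta=-1$ case by your cyclic-vector computation (in which, incidentally, $M$ is only forced to be \emph{upper} anti-triangular, though the anti-diagonal relation you extract is correct and suffices).
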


\begin{proof}
(a) Let $\Phi$ be nonsingular and let  ${\Phi}_{\varepsilon\zeta}$ exist. Then
 $(\varepsilon,\zeta)\ne (-1,1)$ according to \cite[Lemma
8]{ser_izv}.
By \eqref{nwn}, $$\Phi=
\varepsilon\zeta\Phi_{\varepsilon\zeta}^{-1}\Phi^{\as}
\Phi_{\varepsilon\zeta}^{\as}= \zeta  \Phi_{\varepsilon\zeta}^{-1}\Phi^{\as}
\Phi_{\varepsilon\zeta},$$ and so
\begin{equation}\label{eee}
\chi_{\Phi}(x) =\det(xI-\zeta
\Phi^{\as}) =\det(xI-\zeta
\ba\Phi)=\zeta^n\det(\zeta xI-
\ba\Phi)=
\zeta^n\ba\chi_{\Phi}
(\zeta x).
\end{equation}
Since $\chi_{\Phi}(x)=p_{\Phi}(x)^k$, we have $p_{\Phi}(x)^k=\zeta^n\ba p_{\Phi}(\zeta x)^k$.
The unique factorization property leads to $p_{\Phi}(x)=
\zeta^{\deg(p_{\Psi})}\ba p_{\Phi}(\zeta x)$, which proves \eqref{njnj}.

From \eqref{xdn} and \eqref{eee} we have
\begin{equation}\label{hqz}
c_0=\zeta^n\ba c_0,\quad
c_1=\zeta^{n+1}\ba c_1,\quad
\dots,\quad
c_{n-1}=\zeta^{2n-1}\ba c_{n-1}.
\end{equation}

Let us prove that the matrix
${\Phi}_{\varepsilon\zeta}$
defined in \eqref{dxdf} is
nonsingular. This follows from \eqref{nhb} and \eqref{edc1} if $\varepsilon =1$. Let
$\varepsilon =-1$. Then the involution on $\ff$ is the identity. From \eqref{njnj} we have  $\zeta=-1$.  By \eqref{eee}, $\chi_{\Phi}(x)\in\ff[x^2]$. Hence $c_1=0$ and  ${\Phi}_{\varepsilon\zeta}$ is nonsingular, which follows from \eqref{nhb} and \eqref{edc1}.

The equalities \eqref{edc1} imply that
$\Phi_{\varepsilon\zeta}\Phi =
[\zeta^ia_{i+j+1}]_{i,j=1}^n;$
and so the relations \eqref{nwn}
can be written in the form
\[
\zeta^ia_{i+j}=
\varepsilon \zeta^j\ba
a_{j+i},\qquad
\zeta^ia_{i+j+1}=\varepsilon
\zeta^{j+1}\ba
a_{j+i+1},
\]
i.e.,  in the form
\begin{equation}\label{ser30}
a_t= \varepsilon \zeta^t\ba
a_t\qquad\text{for }t=2,\dots, 2n+1.
\end{equation}
The relations \eqref{ser30} hold for all $t \le n+ 1$ since if $\varepsilon =-1$ then  $\zeta =-1$.
Let $l\in\{2,\dots, n+1\}$ and let \eqref{ser30} hold for all $t < n+ l$. Then the relations \eqref{ser30}
hold for $t = n + l$ since from \eqref{edc1} and \eqref{hqz} we have
\begin{align*}
a_{n+l}&=-c_0a_{l} -
c_1a_{l+1}-\dots-c_{n-1}a_{l+n-1}\\
&=-\zeta^n\ba c_0\cdot
\varepsilon \zeta^l\ba
a_l-\zeta^{n+1}\ba c_1\cdot
\varepsilon \zeta^{l+1}\ba
a_{l+1}
-\dots
-\zeta^{2n-1}\ba c_{n-1}\cdot
\varepsilon \zeta^{l+n-1}\ba
a_{l+n-1}
\\&=\varepsilon \zeta^{n+l}(-\ba c_0\ba a_l
-\ba c_1\ba a_{l+1}-\dots
-\ba c_{n-1}\ba a_{l+n-1})
=\varepsilon \zeta^{n+l}\ba
a_{n+l},
\end{align*}
which proves \eqref{ser30}. Hence, $\Phi_{\varepsilon\zeta}$ satisfies \eqref{nwn}.

(b)  Let $\Phi$ be singular. If ${\Phi}_{\varepsilon\zeta}$ exists, then \eqref{eqk} holds according to \cite[Lemma
8]{ser_izv}. The matrix $Z_n(\zeta)$ satisfies \eqref{nwn}.
\end{proof}

\section{Proofs of Corollaries \ref{ynh1} and \ref{theor}}
\label{vvz}

\begin{proof}[Proof of Corollary \ref{theor}]
 (A) We take $\mc O_{\ff}=\{J_n(\lambda)\,|\,\lambda\in\ff\}$.    The field  \eqref{aza} is $\ff$ with the identity involution. All forms \eqref{bhy} are equivalent to exactly one form $x_1^2+\dots+x_s^2$. Hence, each pair $A^{f(x)}_{\Phi}$ is isomorphic to exactly one direct sum of pairs $A^1_{\Phi}=(\Phi,
\Phi_{\varepsilon\zeta})$.
Let $\Phi=J_n(\lambda )$, and let   ${\Phi}_{\varepsilon\zeta}$ exist. If $\lambda \ne 0$, then by
\eqref{njnj}
$(\varepsilon,\zeta)\ne (-1,1)$ and $x-\lambda  =
x-\zeta\lambda $; thus $\zeta =1$, $(\varepsilon,\zeta)=(1,1)$; we take ${\Phi}_{11}=Z_n$.
If $\lambda = 0$, then
\eqref{eqk} holds and we take  ${\Phi}_{\varepsilon\zeta}=Z_n(\zeta)$.
Applying Theorem \ref{Theorem 6}, we obtain the summands (A).

(B) We take $\mc O_{\ff}=\{J_n(\lambda)\,|\,\lambda\in\ff\}$.  The field  \eqref{aza} is $\ff$ with nonidentity involution.  Each form \eqref{bhy} is equivalent to exactly one  form $-\ba x_1x_1-\dots-\ba x_lx_l+\ba x_{l+1}x_{l+1}
+\dots+\ba x_sx_s$. Hence, each pair $A^{f(x)}_{\Phi}$ is isomorphic to  a direct sum of pairs  $A^{\pm 1}_{\Phi}=(\Phi,\pm
\Phi_{11})$, determined uniquely  up to permutations of summands.  We take ${\Phi}_{11}=Z_n$. Applying Theorem \ref{Theorem 6}, we obtain the summands (B).

(C) Let
$\ff=\pp$ be a real
closed field,   and let
$\pp+i\pp$ be its
algebraic closure.
We take $\mc O_{\pp}=\{J_n(a),\: J_n(a+ib)^{\pp}\,|\,a,b\in\ff,\,b\ne 0\}$, in which $b$ is determined up to replacement by $-b$.

(c$_1$)  Let $\Phi=J_n(a )$ with $a \in \pp$, and let   ${\Phi}_{\varepsilon\zeta}$ exist. Then the field  \eqref{aza} is $\pp$, and so each form \eqref{bhy} is equivalent to exactly one  form $-x_1^2-\dots-x_l^2+x_{l+1}^2
+\dots+x_s^2$.
If $a\ne 0$, then by
\eqref{njnj}
$(\varepsilon,\zeta)\ne (-1,1)$ and $x-a  =
x-\zeta a $; thus $\zeta =1$, $(\varepsilon,\zeta)=(1,1)$, and we take ${\Phi}_{11}=Z_n$.
If $a=0$, then
\eqref{eqk} holds and we take  ${\Phi}_{\varepsilon\zeta}=Z_n(\zeta)$.

(c$_2$)  Let  $\Phi=J_n(a+ib )^{\pp}$ with $a,b \in \pp$, $b\ne 0$, and let   ${\Phi}_{\varepsilon\zeta}$ exist. If $\zeta =1$, then the field \eqref{aza} is $\pp+i\pp$ with the identity involution,
 $\varepsilon=1$  by
\eqref{njnj}, and we take ${\Phi}_{11}=Z_{2n}$.
If  $\zeta =-1$,
then the field \eqref{aza} is $\pp+i\pp$ with nonidentity involution,
 $p_{\Phi}(x)=
x^2-2ax+(a^2+b^2) $,  $a=0$ by
\eqref{njnj}, and we take ${\Phi}_{11}=K_n(\varepsilon)$.

Applying Theorem \ref{Theorem 6}, we obtain the summands (C).
\end{proof}

\begin{proof}[Proof of Corollary \ref{ynh1}]
Let $\ff=\cc$ with complex conjugation. By \eqref{rrd},
\begin{equation}\label{rdd}
\pm r(\ffff)=
\pm r(\cc^{\times}/\rr^{\times})= \{e\in\cc\,|\,|e|=1\}.
\end{equation}

(a)  This statement is obtained by applying Theorem \ref{yyj} and \eqref{rdd} to the canonical form of an isometric operator on a complex vector space with nondegenerate Hermitian form given in  \cite[Theorem 2.1(b)]{ser_iso}.

(b)  This statement is obtained by applying Theorem \ref{yyj} and \eqref{rdd} to the canonical form of a selfadjoint operator on a complex vector space with  nondegenerate Hermitian form given in Corollary \ref{theor}(B).
\end{proof}

\section*{Acknowledgements}
V. Futorny was supported by the CNPq (304467/2017-0) and the FAPESP (2018/23690-6).  V.V.~Sergeichuk wishes to thank the University of Sao Paulo, where the paper was written, for hospitality and the FAPESP for financial support (2018/24089-4).


\begin{thebibliography}{99}


\bibitem{au}
Y.H. Au-Yeung, C.K. Li, L. Rodman, $H$-unitary and Lorentz matrices: a review, SIAM J. Matrix  Anal.  Appl.
25 (2004) 1140--1162.


\bibitem{bel}
G.R. Belitskii, V.V. Sergeichuk, Complexity of matrix problems, Linear Algebra Appl. 361 (2003) 203--222.

\bibitem{dok1}
D.\v{Z}. Djokovi\'{c},  J.  Patera,  P.  Winternitz,  H.  Zassenhaus,   Normal  forms of  elements  of classical real and complex Lie and Jordan algebras, J. Math. Phys. 24 (1983) 1363--1373.

\bibitem{dok}
D.\v{Z}. \DJ okovi\'{c}, Structure of isometry group of bilinear spaces, Linear Algebra Appl. 416 (2006) 414--436.

\bibitem{goh_old}
 I. Gohberg, P. Lancaster, L. Rodman, Matrices and Indefinite Scalar Products, Birkh\" auser
Verlag, Basel,  1983.

\bibitem{goh}
 I. Gohberg, P. Lancaster, L. Rodman, Indefinite Linear Algebra and Applications, Birkh\"auser Verlag, Basel, 2006.

\bibitem{goh-R}
I. Gohberg, B. Reichstein, On $H$-unitary and block-Toeplitz $H$-normal operators, Linear Multilinear Algebra 30 (1991) 17--48.


\bibitem{hig}
N.J. Higham, $J$-orthogonal matrices: Properties and generation, SIAM Rev. 45 (2003) 504--519.

\bibitem{hor-ser_can1}
 R.A. Horn, V.V. Sergeichuk, Canonical matrices of bilinear and sesquilinear forms, Linear
Algebra Appl. 428 (2008) 193--223.

\bibitem{hor-ser_mixed} R.A. Horn, V.V. Sergeichuk,
Representations of quivers and mixed graphs, Chapter 34 in: L. Hogben
(Ed.), Handbook of Linear Algebra, 2nd ed., CRC Press, 2014.

\bibitem{lanc}
 P. Lancaster, L. Rodman,  Algebraic Riccati Equations,  The Clarendon Press,  Oxford University Press, New York, 1995.

\bibitem{lan}
S. Lang, Algebra, Springer-Verlag, New York, 2002.

\bibitem{mehl}
 C. Mehl,  On classification of normal matrices in indefinite inner product spaces, Electron. J. Linear Algebra 15 (2006) 50--83.

\bibitem{mehl1}
 C. Mehl,  Essential decomposition of polynomially normal matrices in real indefinite inner product spaces, Electron. J. Linear Algebra, 15 (2006) 84--106.

\bibitem{meh}
V. Mehrmann, H. Xu, Structured Jordan canonical forms for structured matrices that are Hermitian, skew Hermitian or unitary with respect to indefinite inner products, Electron. J. Linear Algebra 5 (1999) 67--103.

\bibitem{mel}
J. Meleiro, V.V. Sergeichuk,
T. Solovera, A. Zaidan,
Classification of linear mappings between indefinite inner product spaces, Linear Algebra Appl. 531 (2017) 356--374.

\bibitem{rod}
L. Rodman,
Similarity vs unitary similarity and perturbation analysis of sign characteristics: Complex and real indefinite inner products, Linear Algebra Appl. 416 (2006) 945--1009.

\bibitem{ser_izv}
V.V. Sergeichuk,
Classification
problems  for systems
of forms and linear
mappings, Math.
USSR-Izv. 31 (no. 3)
(1988) 481--501; Available from:  \verb!arxiv.org/abs/0801.0823!.

\bibitem{ser_brazil}
V.V. Sergeichuk,
Linearization method in classification problems of linear algebra, S\~ao Paulo J. Math. Sci. 1
(2007) 219--240.

\bibitem{ser_iso}
V.V. Sergeichuk,  Canonical matrices of isometric operators on
indefinite inner product spaces, Linear Algebra Appl. 428 (2008)
154--192.

\bibitem{sz}
F. Szechtman, Structure of the group preserving a bilinear form,  Electron. J. Linear Algebra 13 (2005) 197--239.

\end{thebibliography}
\end{document}